\crefname{hypothesis}{Hypothesis}{Hypotheses}
\title{Stein variational reduced basis Bayesian inversion}
\author{Peng Chen \thanks{Oden Institute for Computational Science and Engineering, The University of Texas at Austin, Austin, TX 78712.  (\email{peng@oden.utexas.edu})} \and Omar Ghattas \thanks{Department of
		Mechanical Engineering, and Department of Geological Sciences, Oden Institute
		for Computational Engineering \& Sciences, The
		University of Texas at Austin, Austin, TX 78712 (\email{omar@oden.utexas.edu})}}
\newcommand{\bA}{{\mathbb{A}}}
\newcommand{\bE}{{\mathbb{E}}}
\newcommand{\bN}{{\mathbb{N}}}
\newcommand{\bR}{{\mathbb{R}}}
\newcommand{\bO}{{\mathbb{O}}}
\newcommand{\cH}{\mathcal{H}}
\newcommand{\cL}{\mathcal{L}}
\newcommand{\cN}{\mathcal{N}}
\newcommand{\cO}{\mathcal{O}}
\newcommand{\cS}{\mathcal{S}}
\newcommand{\cT}{\mathcal{T}}
\newcommand{\bsb}{\boldsymbol{b}}
\newcommand{\bsf}{\boldsymbol{f}}
\newcommand{\bsu}{\boldsymbol{u}}
\newcommand{\bspsi}{\boldsymbol{\psi}}
\newcommand{\beq}{\begin{equation}}
\newcommand{\eeq}{\end{equation}}
\DeclareMathOperator*{\argmax}{arg\,max}
\begin{document}

\maketitle

\begin{abstract}
We propose and analyze a Stein variational reduced basis method (SVRB) to solve large-scale PDE-constrained Bayesian inverse problems. To address the computational challenge of drawing numerous samples requiring expensive PDE solves from the posterior distribution, we integrate an adaptive and goal-oriented model reduction technique with an optimization-based Stein variational gradient descent method (SVGD). The samples are drawn from the prior distribution and iteratively pushed to the posterior by a sequence of transport maps, which are constructed by SVGD, requiring the evaluation of the potential---the negative log of the likelihood function---and its gradient with respect to the random parameters, which depend on the solution of the PDE. To reduce the computational cost, we develop an adaptive and goal-oriented model reduction technique based on reduced basis approximations for the evaluation of the potential and its gradient. We present a detailed analysis for the reduced basis approximation errors of the potential and its gradient, the induced errors of the posterior distribution measured by Kullback--Leibler divergence, as well as the errors of the samples. To demonstrate the computational accuracy and efficiency of SVRB, we report results of numerical experiments on a Bayesian inverse problem governed by a diffusion PDE with random parameters with both uniform and Gaussian prior distributions. Over 100X speedups can be achieved while the accuracy of the approximation of the potential and its gradient is preserved.

\end{abstract}

\begin{keywords}
Bayesian inverse problems, model reduction, reduced basis, greedy algorithm, variational inference, error analysis, uncertainty quantification
\end{keywords}

\begin{AMS}
62F15, 65M32, 65M75, 65C20, 78M34
\end{AMS}


\section{Introduction}

Uncertainty, often represented in the form of random parameters, is ubiquitous in computational modeling and simulation of scientific, engineering, and societal systems. Quantifying the input uncertain is critical for making reliable system predictions and their optimization. Given possibly noisy observational data on some system outputs, Bayesian inversion provides a versatile and optimal approach for inference of the random parameters---versatile in modeling the random parameters by prior distributions in a probability framework, and optimal in providing the posterior distribution of the random parameters that optimally matches the system output and the observational data in a certain entropy measure \cite{Stuart10}. Drawing samples from the posterior distribution, and evaluating some statistics, e.g., expectation, variance, failure probability, etc., of a given quantity of interest, are among the central tasks of Bayesian inversion. These tasks are often computationally challenging due to a number of factors. First, the geometry of the posterior distribution may have complex features in the parameter space, such as local concentration, multimodality, and non-Gaussianity. Second, sampling from the posterior distribution is a significant challenging when the parameter space is high-dimensional due to the curse of dimensionality faced by many computational methods, i.e., the complexity grows exponentially with respect to the parameter dimension. Third, each evaluation of the system output involves solving a model that describes the system, which may be very expensive and makes sampling methods that require numerous evaluations of the output prohibitive. In particular, we consider systems modeled by partial differential equations (PDE) that are computationally expensive to solve.


To address the above challenges related to sampling from the posterior distribution, many computational methods have been developed \cite{Stuart10, BieglerBirosGhattasEtAl11}, among which we mention the following developments over the last decade. Built on the classical Markov chain Monte Carlo (MCMC) method \cite{Hastings70}, the geometry of the (log) posterior as captured by its gradient, Hessian, and higher order derivatives with respect to (w.r.t.) the parameter has been exploited to accelerate the convergence of MCMC \cite{GirolamiCalderhead11, MartinWilcoxBursteddeEtAl12, HoangSchwabStuart13, Bui-ThanhGhattasMartinEtAl13, SpantiniSolonenCuiEtAl15,  IsaacPetraStadlerEtAl15,  PetraMartinStadlerEtAl14, BeskosGirolamiLanEtAl17}. By taking advantage of the smoothness and sparsity of the posterior w.r.t.\ the parameters, sparse polynomial approximations and sparse quadratures based on sparse grids have been developed to achieve fast convergence in evaluating various statistics w.r.t.\ the posterior \cite{MarzoukXiu09, SchwabStuart12, SchillingsSchwab13, ChenSchwab15, ChenSchwab16a, ChenSchwab16b, ChenSchwab16c, SchillingsSchwab16,  ChenVillaGhattas17, ZahmCuiLawEtAl18, ZahrCarlbergKouri19, FarcasLatzUllmannEtAl19}. Transport-based variational methods have been recently developed that push the prior samples to the posterior through a measure transport, which is obtained by solving an optimization problem \cite{ElMoselhyMarzouk12, LiuWang16, Liu17, LiuZhu18, DetommasoCuiMarzoukEtAl18, SpantiniBigoniMarzouk18, ChenMackeyGorhamEtAl18, LuLuNolen19, ChenWuChenEtAl19, LiLiuLiuEtAl19}. To reduce the expensive computational cost in solving the PDE model, various types of surrogate models are constructed to approximate the PDE solution and the posterior with significantly reduced expense. In particular we mention projection based reduced order models 	\cite{NguyenRozzaHuynhEtAl09, LiebermanWillcoxGhattas10, CuiMarzoukWillcox15, ChenSchwab16a, ChenSchwab16b, ChenSchwab16c, CuiMarzoukWillcox16, ChenQuarteroniRozza17, ZahrCarlbergKouri19}.

In this work, we propose and analyze a new computational approach for large-scale PDE-constrained Bayesian inverse problems by integrating an adaptive and goal-oriented model reduction technique with a Stein variational gradient descent method (SVGD). To draw samples from the posterior distribution, we employ the optimization-based SVGD to iteratively transport samples drawn from the prior distribution to the posterior. This approach requires the evaluation of a potential---the negative log of the likelihood function---and its gradient with respect to the random parameters. In each SVGD iteration, to reduce the computational cost in solving the large-scale PDEs, we develop an adaptive and goal-oriented model reduction technique based on reduced basis approximations for the evaluation of the potential and its gradient. More specifically, to achieve accurate and efficient reduced basis approximation, we use a dual-weighted residual as the a-posteriori error indicator for the potential and propose an adaptive greedy algorithm to construct the reduced basis spaces for 
both a state PDE and an adjoint PDE. Moreover, we use the dual-weighted residual and its gradient as correctors to improve the approximation accuracy of the potential and its gradient, which involves solution of an incremental state PDE and an incremental adjoint PDE by reduced basis approximations. Furthermore, to guarantee certified reduced basis approximations at all SVGD samples, especially for the SVGD samples driven close to the posterior distribution, we take the SVGD samples as the training samples and decrease the tolerance in the greedy construction informed by a convergence criterion of the SVGD iteration. We present a detailed analysis for the reduced basis approximation errors of the potential and its gradient, the induced errors of the posterior distribution measured by Kullback--Leibler divergence, as well as the errors for the samples. To demonstrate the computational accuracy and efficiency of the proposed Stein variational reduced basis (SVRB) method, we report results of numerical experiments on PDE-based Bayesian inference for random parameters with both uniform and Gaussian distributions.  

The rest of the paper is organized as follows. In Section \ref{sec:Bayesian} we present the general formulation of Bayesian inverse problems, followed by Section \ref{sec:Stein} on the SVGD method to draw samples from the posterior distribution. Section \ref{sec:modelreduction} is devoted to the development of the goal-oriented and adaptive reduced basis method, whose accuracy and efficiency are analyzed in Section \ref{sec:errorestimates} and demonstrated by numerical experiments in Section \ref{sec:numerics}. Conclusions and perspectives are provided in Section \ref{sec:conclusion}.

\section{Bayesian inversion}
\label{sec:Bayesian}

Let $\theta = (\theta_1, \dots, \theta_d) \in \Theta \subset \bR^d$ denote a vector of random parameters defined in the parameter space $\Theta$ of dimension $d\in \bN$, which is assumed to have a prior distribution with density function $p_0: \Theta \to \bR$. Let $f: \bR^d \to \bR^s$ denote a parameter-to-observable map with observational data $y \in \bR^s$ of dimension $s\in \bN$ given by 
\begin{equation}
y = f(\theta) + \xi,
\end{equation}
where $\xi$ represents an observation noise. We assume the noise has Gaussian distribution $\cN(0, \Gamma)$ with  symmetric positive definite covariance $\Gamma \in \bR^{s\times s}$. Under the assumption that $\theta$ and $\xi$ are independent, Bayes' rule provides the posterior distribution with density function $p_y:\Theta \to \bR$ as
\begin{equation}\label{eq:posterior_density}
p_y(\theta) = \frac{1}{Z} p(y|\theta) p_0(\theta),
\end{equation}
where $p(y|\theta)$ is a likelihood function given by 
\begin{equation}
p(y|\theta) = \exp\left(- \eta_y(\theta)) \right),
\end{equation}
with the potential function $\eta_y:\Theta \to \bR$ defined as
\begin{equation}\label{eq:potential}
\eta_y(\theta) = \frac{1}{2} ||y - f(\theta)||_\Gamma = \frac{1}{2} (y - f(\theta))^T \Gamma^{-1} (y - f(\theta)).
\end{equation}
$Z$ represents a normalization constant, given by 
\begin{equation}
Z = \int_\Theta p(y|\theta) p_0(\theta) d\theta,
\end{equation}
which is often computational intractable, especially for large dimension $d$.
The central task of Bayesian inversion is to sample from the posterior distribution and compute some statistics of a given quantity of interest w.r.t.\ the posterior, e.g., mean, variance, failure probability, etc. Challenges arise when (1) the posterior distribution has complex geometry, e.g., concentrating in a local parameter region, featuring multiple modes; (2) the parameter-to-observable map $f$ is very expensive to evaluation, e.g., it involves large-scale PDE solve; (3) the parameter dimension $d$ is high. 

\section{Stein variational gradient descent}
\label{sec:Stein}

To draw samples from the posterior distribution $\mu_y$ with density $p_y$, we seek an invertible transport map $T:\bR^d \to \bR^d$ such that for any sample $\theta$ drawn from the prior distribution $\mu_0$ with density $p_0$, $T(\theta)$ is a sample drawn from the posterior distribution, or equivalently we seek $T$ such that $T_\sharp \mu_0 = \mu_y$, where $T_\sharp$ represents a pushforward map that satisfies 
\begin{eqnarray}
p_y(\theta) = p_0(T^{-1}(\theta)) |\text{det} \nabla_\theta T^{-1}(\theta)| \text{ or } p_0(\theta) = p_y(T(\theta)) |\text{det} \nabla_\theta T(\theta)|,
\end{eqnarray}
where $\text{det}$ represents the determinant of a matrix. 
 For this purpose, one common practice is to find the transport map $T$ in a certain function class $\cT$ that minimizes the Kullback--Leibler (KL) divergence between $T_\sharp \mu_0$ and $\mu_y$, i.e.,
\begin{equation}\label{eq:DKL}
\min_{T \in \cT} D_{\text{KL}}(T_\sharp \mu_0 | \mu_y),
\end{equation}
where the KL divergence between two probability distribution $\mu_1, \mu_2$ with densities $p_1, p_2$, which satisfy the absolute continuity $\mu_1 \ll \mu_2$, i.e., $d\mu_1(\theta)/d\mu_2(\theta) = p_1(\theta)/p_2(\theta) \geq 0$ for all $\theta \in \Theta$, is defined as 
\begin{equation}\label{eq:KL-divergence}
D_{\text{KL}}(\mu_1 | \mu_2) := \bE_{\theta \sim \mu_1} \left[\log\frac{d\mu_1}{d\mu_2}\right] = \int_\Theta p_1(\theta) \log\frac{p_1(\theta)}{p_2(\theta)} d\theta.
\end{equation}

To solve the optimization problem \eqref{eq:DKL}, we first form the (possibly very complex) transport map $T$ as a composition of a sequence of (much simpler) transport maps as 
\begin{equation}
T = T_L \circ T_{L-1} \circ \cdots \circ T_1 \circ T_0,
\end{equation}
where $T_l$, $l = 0, \dots, L$, are invertible perturbation maps from identity defined as
\begin{equation}
T_l := I + \alpha_l Q_l, \quad \text{i.e., } T_l(\theta) := \theta + \alpha_l Q_l(\theta), \quad \forall \theta \in \Theta,
\end{equation}
where $\alpha_l$ is a step size, $I$ is the identity map, $Q_l: \bR^d \to \bR^d$ is a perturbation map. For $l = 1, \dots, L$, let $\mu_l$ denote the probability distribution defined as  
\begin{equation}
\mu_{l} = (T_{l-1} \circ \cdots \circ T_0)_\sharp \mu_0.
\end{equation}
By a gradient descent method, $Q_l$ is taken as the negative of the first order variation of $D_{\text{KL}} ((I + Q)_\sharp \mu_{l} | \mu_y)$ w.r.t.\ $Q$, evaluated at $Q=0$, i.e.,
\begin{equation}
Q_l := - \nabla_{Q} D_{\text{KL}} ((I + Q)_\sharp \mu_{l} | \mu_y) |_{Q=0}.
\end{equation}


Let $\cH$ denote a reproducing kernel Hilbert space (RKHS) with reproducing kernel $k(\cdot, \cdot): \bR^d \times \bR^d \to \bR$, e.g., a radial basis function kernel \cite{LiuWang16}
\begin{equation}\label{eq:kernel}
k(\theta, \theta') = \exp\left(- \frac{1}{h} ||\theta - \theta'||_2^2\right),
\end{equation}
for a suitable scaling factor $h > 0$, where $||\cdot||_2$ denotes the Euclidean norm.
By taking the function space as $\cT = \cH^d$, the tensor product of $\cH$,
we obtain \cite{LiuWang16}
\begin{equation}\label{eq:Q_l}
Q_l(\cdot) = \bE_{\theta \sim \mu_{l}} \left[
\cS_{p_y(\theta)} \otimes k(\theta, \cdot)
\right],
\end{equation}
where the expectation is take w.r.t.\ the distribution $\mu_l$, $\cS_{p_y(\theta)}$ is the Stein operator, 
\begin{equation}\label{eq:Stein}
\cS_{p_y(\theta)} \otimes k(\theta, \cdot) = \nabla_\theta \log(p_y(\theta)) k(\theta, \cdot) + \nabla_\theta k(\theta, \cdot),
\end{equation}
for which the gradient descent method with the perturbation map $Q_l$ given by the first order variation in \eqref{eq:Q_l} is named Stein variational gradient descent (SVGD) \cite{LiuWang16}. Note that $p_y(\theta)$ in \eqref{eq:Stein} is the posterior density defined in \eqref{eq:posterior_density}, which involves the normalization constant $Z$ that is often computationally intractable. However, the gradient $\nabla_\theta \log(p_y(\theta))$ in \eqref{eq:Stein} does not depend on $Z$ since, by definition \eqref{eq:posterior_density}, 
\begin{equation}\label{eq:gradient_logposterior}
\nabla_\theta \log(p_y(\theta)) = \frac{\nabla_\theta (p(y|\theta)p_0(\theta))}{p(y|\theta)p_0(\theta)} = -\nabla_\theta \eta_y(\theta) + \frac{\nabla_\theta p_0(\theta)}{p_0(\theta)}.
\end{equation}
 To evaluate the expectation in \eqref{eq:Q_l} w.r.t.\ $\mu_{l}$, a sample average approximation is used,
\begin{equation}\label{eq:pertubation}
Q_l (\cdot) \approx  \hat{Q}_l(\cdot) =
\frac{1}{M} \sum_{m = 1}^M \left(\nabla_{\theta_m^{l}} \log(p_y(\theta_m^{l})) k(\theta_m^{l}, \cdot) + \nabla_{\theta_m^{l}} k(\theta_m^{l}, \cdot)\right),
\end{equation}
where the samples $\theta_m^l$, $m = 1, \dots, M$, $l = 1, \dots, L$, are given by 
\begin{equation}\label{eq:samples}
\theta_m^l = \theta_m^{l-1} + \alpha_l \hat{Q}_l(\theta_m^{l-1}), 
\end{equation}
with the initial samples $\theta_m^0$, $m = 1, \dots, M$, randomly drawn from the prior distribution $\mu_0$.
Convergence of the empirical distribution of the samples $\theta_m^L$, $m = 1, \dots, M$, to the posterior distribution $\mu_y$ when $M, L \to \infty$ was established in \cite{Liu17} for the step size $\alpha_l$ satisfying suitable conditions. 
The SVGD is summarized in Algorithm \ref{alg:SVGD}. 

\begin{algorithm} 
\caption{Stein variational gradient descent (SVGD)} 
\label{alg:SVGD} 
\begin{algorithmic}[1] 
\STATE \textbf{Input:} random samples $\theta_m^0 \sim \mu_0$, $m = 1, \dots, M$, tolerance $\varepsilon$, maximum step $L$.
\STATE \textbf{Output:} samples $\theta_m$, $m = 1, \dots, M$, approximate of the posterior $\mu_y$.
\STATE Initialize $l = 0$, $t_l = 2\varepsilon$.
    \WHILE{$l \leq L$ and $t_l > \varepsilon$}\label{line:stop}
    \STATE For each $n = 1, \dots, M$, compute
    $$
     \hat{Q}_l(\theta_n^{l}) = \frac{1}{M} \sum_{m = 1}^M \left(\nabla_{\theta_m^{l}} \log(p_y(\theta_m^{l})) k(\theta_m^{l}, \theta_n^l) + \nabla_{\theta_m^{l}} k(\theta_m^{l}, \theta_n^l)\right).
    $$
    \STATE Compute a step size $\alpha_l$.
    \STATE For each $n = 1, \dots, M$, update the samples 
    $$
    \theta_n^{l+1} = \theta_n^{l} + \alpha_l \hat{Q}_l(\theta_n^l).
    $$
    \STATE Compute a stopping indicator $t_l$ and update $l = l+1$.
    \ENDWHILE
    \STATE Set $\theta_m = \theta_m^l$, $m = 1, \dots, M$.
\end{algorithmic}
\end{algorithm}

The indicator $t_l$ in line \ref{line:stop} is used as one stopping criterion. Here we use
\begin{equation}\label{eq:t-l}
t_l = \max_{m = 1, \dots, M} ||\hat{Q}_l(\theta_m^{l-1})||_2,
\end{equation}
i.e., the algorithm stops when the maximum gradient norm becomes smaller than a given tolerance. Different adaptive methods can be used to compute the step size $\alpha_l$. Here we use a line search method to find $\alpha_l$ such that 
the KL divergence $D_{\text{KL}}((T_l)_\sharp \mu_l | \mu_y)$ is reduced, which is given by
\begin{equation}
D_{\text{KL}}((T_l)_\sharp \mu_l | \mu_y) = D_{\text{KL}}(\mu_l | (T_l)^\sharp\mu_y) = \bE_{\theta \sim \mu_l} \left(\log \frac{ p_l(\theta)}{p_y(T_l(\theta)) |\det \nabla T_l(\theta)|}\right),
\end{equation}
where $T^\sharp_l$ is the puallback such that $(T_l)^\sharp\mu_y(d\theta) = p_y(T_l(\theta)) |\det \nabla T_l(\theta)| d\theta$.
As $p_l(\theta)$ does not depend on $\alpha_l$, we define a merit function for the line search by sample average approximation of the expectation as 
\begin{equation}
 \bE_{\theta \sim \mu_l} \left(\log \frac{ p_l(\theta)}{p_y(T_l(\theta)) |\det \nabla T_l(\theta)|}\right) \approx -\frac{1}{M} \sum_{m=1}^M \log(p_y(T_l(\theta_m^l))) + \log(|\det \nabla T_l(\theta_m^l)|).
\end{equation}
In practice, we can neglect the second term, which is close to zero when the transport map is close to identity, especially towards the convergence.

\section{Model reduction}
\label{sec:modelreduction}

In this work, we consider that the evaluation of the parameter-to-observable map $f$ involves solution of parametric partial differential equations (PDE), which is computationally expensive to solve. In this section, we develop a goal-oriented model reduction technique based on reduced basis approximation for its evaluation as well as its gradient required by SVGD. 

\subsection{Parametric PDE models}
We consider that the parameter-to-observable map $f$ is given by 
\begin{equation}\label{eq:p-t-o_map}
f(\theta) = \cO(u(\theta)),
\end{equation}
where 
$\cO = (o_1, \dots, o_s): V \to \bR^s$ denotes a vector of observation functionals as a mapping from a Hilbert space $V$ to $\bR^s$; $u(\theta)$ is the solution of a linear parametric PDE model, given in weak form as: given $\theta \in \Theta$, find $u(\theta) \in V$ such that 
\begin{equation}\label{eq:state}
A(u, v; \theta) = F(v; \theta), \quad \forall v \in V,
\end{equation}
where $v \in V$ is a test variable. $A(\cdot, \cdot; \theta): V \times V \to \bR$ is a parametric bilinear form. $F(\cdot; \theta): V \to \bR$ is a parametric linear form. 

To evaluate the gradient of the log-posterior \eqref{eq:gradient_logposterior}, we need to compute the gradient of the potential $\nabla_\theta \eta_y(\theta)$, or more explicitly $\nabla_\theta \eta_y(u(\theta))$ since $\eta_y$ depends on $\theta$ through the solution $u$ of problem \eqref{eq:state}. To compute the gradient, we apply a Lagrange multiplier approach by defining the Lagrangian 
\begin{equation}
\cL(u, \psi, \theta) := \eta_y(u) + A(u, \psi; \theta) - F(\psi; \theta),
\end{equation}
where $\psi$ is an adjoint variable or a Lagrange multiplier. By taking the variation of the Lagrangian w.r.t.\ $u$ as zero, we obtain: find $\psi \in V$ such that 
\begin{equation}\label{eq:adjoint}
A(w, \psi; \theta) = -\nabla_u \eta_y|_u(w), \quad \forall w \in V,
\end{equation}
where by the definition of the potential $\eta_y$ in \eqref{eq:potential} we have
\begin{equation}\label{eq:nabla_u-eta}
\nabla_u \eta_y|_u(w) = - (y - \cO(u))^T \Gamma^{-1} \cO(w).
\end{equation}
With the state $u(\theta)$ and adjoint $\psi(\theta)$ obtained, we can evaluate $\nabla_\theta \eta_y(u)$ by 
\begin{equation}\label{eq:eta-gradient}
\nabla_\theta \eta_y(u) = \partial_\theta \cL(u, \psi, \theta) = \partial_\theta A(u, \psi; \theta) - \partial_\theta F(\psi; \theta).
\end{equation}

\subsection{High-fidelity approximations}
To numerically solve the parametric PDE \eqref{eq:potential} and its adjoint \eqref{eq:adjoint}, and to evaluate the potential and its gradient, we apply high-fidelity approximations based on a finite element method. 

Let $V_h \subset V$ denote a finite element subspace of $V$ with degrees of freedom (DOF) $N_h$.
Let $\{\phi_h^n, n = 1, \dots, N_h\}$ denote the (piecewise polynomial) basis of $V_h$. Then the high-fidelity approximation of the parametric PDE \eqref{eq:potential} reads: given $\theta \in \Theta$, find $u_h \in V_h$ such that 
\begin{equation}\label{eq:state-HiFi}
A(u_h, v_h; \theta) = F(v_h; \theta), \quad \forall v_h \in V_h.
\end{equation}


By writting the representation of $u_h$ in $V_h$ as
\begin{equation}
u_h = \sum_{n=1}^{N_h} u_h^n \phi_h^n,
\end{equation}
where $\bsu_h = (u_h^1, \dots, u_h^{N_h})^T \in \bR^{N_h}$ is a coefficient vector, we obtain the algebraic system corresponding to \eqref{eq:state-HiFi} as 
\begin{equation}\label{eq:state-algebraic}
\bA_h(\theta) \bsu_h = \bsf_h(\theta),
\end{equation}
where the parametric matrix $\bA_h(\theta)$ and vector $\bsf_h(\theta)$ are given by 
\begin{equation}
(\bA_h(\theta))_{m,n} = A(\phi_h^n, \phi_h^m; \theta) \; \text{ and } \; (\bsf_h(\theta))_{m} = F(\phi_h^m; \theta), \quad m, n = 1, \dots, N_h.
\end{equation}

Similarly, the high-fidelity approximation of the adjoint PDE \eqref{eq:adjoint} reads: given $\theta \in \Theta$, and $u_h$ as the solution of \eqref{eq:state-HiFi} at $\theta$, find $\psi_h \in V_h$ such that 
\begin{equation}\label{eq:adjoint-HiFi}
A(w_h, \psi_h; \theta) = -\nabla_u \eta_y|_{u_h}(w_h), \quad \forall w_h \in V_h.
\end{equation}

We write the representation of $\psi_h$ in $V_h$ as 
\begin{equation}
\psi_h = \sum_{n=1}^{N_h} \psi_h^n \phi_h^n,
\end{equation}
where the coefficient vector $\bspsi_h = (\psi_h^1, \dots, \psi_h^{N_h})^T \in \bR^{N_h}$ is the solution of the algebraic system 
\begin{equation}\label{eq:adjoint-algebraic}
\bA^T_h(\theta) \bspsi_h = \bsb^u_h,
\end{equation}
where $\bA^T_h(\theta)$ is the transpose of $\bA_h(\theta)$. The vector $\bsb^u_h$ is given by 
\begin{equation}
(\bsb^u_h)_m = (y - \cO(u_h))^T \Gamma^{-1} \cO(\phi_h^m) = (y - \bsu_h^T \bO_h)^T \Gamma^{-1} \bO^m_h, \quad m = 1, \dots, N_h,
\end{equation}
where the matrix $\bO_h \in \bR^{N_h\times s}$, whose $m$-th row $\bO^m_h \in \bR^s$ is given by 
\begin{equation}
\bO^m_h = (o_1(\phi_h^m), \dots, o_s(\phi_h^m))^T, \quad m = 1, \dots, N_h.
\end{equation}
At any $\theta \in \Theta$, with the high-fidelity state $u_h$ and adjoint $\psi_h$ obtained as the solutions of \eqref{eq:state-HiFi} and \eqref{eq:adjoint-HiFi}, we can approximate the potential $\eta_y$ as
\begin{equation}\label{eq:eta-HiFi}
\eta_y(u_h) = \frac{1}{2}(y - \cO(u_h))^T \Gamma^{-1} (y - \cO(u_h)) = \frac{1}{2} (y - \bsu_h^T \bO_h)^T \Gamma^{-1} (y - \bsu_h^T \bO_h),
\end{equation}
and its gradient by \eqref{eq:eta-gradient} as
\begin{equation}\label{eq:eta-gradient-HiFi}
\nabla_\theta \eta_y(u_h) = \partial_\theta A(u_h, \psi_h; \theta) - \partial_\theta F(\psi_h; \theta) = \bspsi_h^T \partial_\theta \bA_h(\theta) \bsu_h - \bspsi_h^T \partial_\theta \bsf_h(\theta).
\end{equation}

\subsection{Reduced basis approximations}
\label{sec:RB}
When the DOF $N_h$ of the high-fidelity approximations based on the finite element method is large, which is often required for the approximations to achieve accuracy, solution of the systems \eqref{eq:state-algebraic} and \eqref{eq:adjoint-algebraic} are computationally very expensive. To reduce the computational cost, we employ reduced basis approximations based on a reduced basis method \cite{HesthavenRozzaStamm15, QuarteroniManzoniNegri2015}, which is one common model reduction technique built on the high-fidelity approximations. 

Let $V_r \subset V_h$ denote a reduced basis space spanned by $N^u_r \ll N_h$ basis functions $\{\phi_u^n \in V_h, n = 1, \dots, N^u_r\}$, which will be constructed later for the solution manifold of the parametric PDE \eqref{eq:state-HiFi}. Then the reduced basis approximation of the parametric PDE \eqref{eq:state} reads: given $\theta \in \Theta$, find $u_r \in V_r$ such that  
\begin{equation}\label{eq:state-RB}
A(u_r, v_r; \theta) = F(v_r; \theta), \quad \forall v_r \in V_r.
\end{equation}

Similar to the high-fidelity expansion, we write the representation of $u_r$ in $V_r$ as 
\begin{equation}
u_r = \sum_{n = 1}^{N_r^u} u_r^n \phi_u^n,
\end{equation}
where $\bsu_r = (u_r^1, \dots. u_r^{N^u_r}) \in \bR^{N^u_r}$ is the coefficient vector of $u_r$, which is a solution of the algebraic system 
\begin{equation}
\bA^u_r(\theta) \bsu_r = \bsf^u_r(\theta).
\end{equation}
The parametric matrix $\bA^u_r(\theta)$ and vector $\bsf^u_r(\theta)$ are given by 
\begin{equation}
(\bA^u_r(\theta))_{m,n} = A(\phi_u^n, \phi_u^m; \theta) \; \text{ and } \; (\bsf^u_r(\theta))_{m} = F(\phi_u^m; \theta), \quad m, n = 1, \dots, N^u_r.
\end{equation}

Analogously, let $W_r \subset V_h$ denote a reduced basis space spanned by $N^\psi_r \ll N_h$ basis functions $\{\phi_\psi^n \in V_h, n = 1, \dots, N^\psi_r\}$, which will be constructed later for the adjoint solution manifold of the parametric adjoint PDE \eqref{eq:adjoint-HiFi}. Then the reduced basis approximation of the parametric adjoint PDE \eqref{eq:adjoint} reads: given $\theta \in \Theta$, and the solution $u_r \in V_r$ of problem \eqref{eq:state-RB} at $\theta$, find $\psi_r \in W_r$ such that
\begin{equation}\label{eq:adjoint-RB}
A(w_r, \psi_r; \theta) = -\nabla_u \eta_y|_{u_r}(w_r), \quad \forall w_r \in W_r.
\end{equation}

We write the representation of $\psi_r$ in $W_r$ as 
\begin{equation}
\psi_r = \sum_{n=1}^{N^\psi_r} \psi_r^n \phi_\psi^n,
\end{equation}
where $\bspsi_r = (\psi_r^1, \dots, \psi_r^{N^\psi_r})^T \in \bR^{N_r}$ is the solution of the algebraic system 
\begin{equation}\label{eq:adjoint-RB-algebraic}
\bA^\psi_r(\theta) \bspsi_r = \bsb^\psi_r,
\end{equation}
where the parametric matrix $\bA^\psi_r(\theta)$ is given by
\begin{equation}\label{eq:A-r-psi}
(\bA^\psi_r(\theta))_{m,n} = A(\phi_\psi^m, \phi_\psi^n; \theta), \quad m, n = 1, \dots, N^\psi_r.
\end{equation}
The vector $\bsb^\psi_r$ is given by
\begin{equation}
(\bsb^\psi_r)_m = (y - \cO(u_r))^T \Gamma^{-1} \cO(\phi_\psi^m) = (y - \bsu_r^T \bO_r^u)^T \Gamma^{-1} (\bO_r^\psi)^m, \quad m = 1, \dots, N^\psi_r,
\end{equation}
where the matrix $\bO^u_r \in \bR^{N^u_r\times s}$, whose $m$-th row $(\bO^u_r)^m \in \bR^s$ is given by 
\begin{equation}
(\bO^u_r)^m = (o_1(\phi_u^m), \dots, o_s(\phi_u^m))^T, \quad m = 1, \dots, N^u_r,
\end{equation}
and the matrix $\bO^\psi_r \in \bR^{N^\psi_r\times s}$, whose $m$-th row $(\bO^\psi_r)^m \in \bR^s$ is given by 
\begin{equation}
(\bO^\psi_r)^m = (o_1(\phi_\psi^m), \dots, o_s(\phi_\psi^m))^T, \quad m = 1, \dots, N^\psi_r.
\end{equation}
At any $\theta \in \Theta$, with the reduced basis state $u_r$ and adjoint $\psi_r$ obtained as the solutions of \eqref{eq:state-RB} and \eqref{eq:adjoint-RB}, we can approximate the potential $\eta_y$ as 
\begin{equation}\label{eq:eta-RB}
\eta_y(u_r) = \frac{1}{2}(y - \cO(u_r))^T \Gamma^{-1} (y - \cO(u_r)) = \frac{1}{2} (y - \bsu_r^T \bO_r^u)^T \Gamma^{-1} (y - \bsu_r^T \bO_r^u),
\end{equation}
and its gradient by \eqref{eq:eta-gradient} as
\begin{equation}\label{eq:eta-gradient-RB}
\nabla_\theta \eta_y(u_r) = \partial_\theta A(u_r, \psi_r; \theta) - \partial_\theta F(\psi_r; \theta) = \bspsi_r^T \partial_\theta \bA_r^{u,\psi}(\theta) \bsu_r - \bspsi_r^T \partial_\theta \bsf^\psi_r(\theta),
\end{equation}
where the matrix $\bA_r^{u,\psi}(\theta) \in \bR^{N_r^\psi \times N_r^u}$, whose $mn$-th element for $ m = 1, \dots, N^\psi_r, n = 1, \dots, N^u_r$, is given by 
\begin{equation}\label{eq:Arupsi}
(\bA_r^{u,\psi}(\theta))_{m,n} = A(\phi_u^n, \phi_\psi^m; \theta).
\end{equation}
The vector $\bsf^\psi_r(\theta) \in \bR^{N_r^\psi}$, whose $m$-th element for $m =1 , \dots, N_r^\psi$, is given by 
\begin{equation}\label{eq:frpsi}
(\bsf^\psi_r(\theta))_m = F(\phi_\psi^m; \theta).
\end{equation}

\subsection{Goal-oriented approximations}

To develop a goal-oriented construction of the reduced basis functions $\phi_u^m$ and $\phi_\psi^m$, $m = 1, \dots, N_r$, and to improve the accuracy of the reduced basis approximations of the potential $\eta_y(u(\theta))$ and its gradient $\nabla_\theta \eta_y(u(\theta))$, we use a dual weighted residual, as employed in \cite{ChenSchwab15, ChenSchwab16a} to approximate the potential $\eta_y(u(\theta))$ for Bayesian inversion.
At any $\theta \in \Theta$, with the state $u_r$ and adjoint $\psi_r$ obtained as the solutions of the reduced parametric problems \eqref{eq:state-RB} and \eqref{eq:adjoint-RB}, we define the dual weighted residual as 
\begin{equation}\label{eq:Delta}
\Delta^\eta_r(\theta) := A(u_r, \psi_r; \theta) - F(\psi_r;\theta),
\end{equation}
which can be evaluated as 
\begin{equation}
\Delta^\eta_r(\theta) = \bspsi_r^T A_r^{u, \psi}(\theta) \bsu_r - \bspsi_r^T \bsf_r^\psi(\theta),
\end{equation}
where $A_r^{u, \psi}$ and $\bsf_r^\psi$ are given in \eqref{eq:Arupsi} and \eqref{eq:frpsi}. 

Based on the dual weighted residual $\triangle_r^\eta(\theta)$, we define a modified reduced basis approximation of the potential as 
\begin{equation}\label{eq:dwreta}
\eta_y^\Delta(\theta) := \eta_y(u_r(\theta)) + \Delta_r^\eta(\theta).
\end{equation}
To compute $\partial_\theta \eta_y^\Delta(\theta)$, the gradient of the reduced basis approximation of the potential with modification in \eqref{eq:dwreta}, which is written more explicitly as 
\begin{equation}
\eta_y^\Delta(\theta) = \eta_y(u_r) + A(u_r, \psi_r;\theta) - F(\psi_r;\theta),
\end{equation}
where $u_r$ and $\psi_r$ are the solutions of the reduced state problem \eqref{eq:state-RB} and adjoint problem \eqref{eq:adjoint-RB}, we form the Lagrangian 
\begin{equation}
\begin{split}
\cL_\eta(u_r, \psi_r, \hat{u}_r, \hat{\psi}_r; \theta) = \eta_y^\Delta(\theta) &+ A(u_r, \hat{u}_r;\theta) - F(\hat{u}_r;\theta) \\
&+ A(\hat{\psi}_r, \psi_r; \theta) + \nabla_u \eta_y|_{u_r}(\hat{\psi}_r), 
\end{split}
\end{equation}
where $\hat{u}_r \in V_r$ and $\hat{\psi}_r \in W_r$ are the Lagrange multipliers. By setting the variation of the Lagrangian w.r.t.\ $\psi_r$ as zero, we have the incremental adjoint problem: find $\hat{\psi}_r \in W_r$ such that
\begin{equation}\label{eq:psihat}
A(\hat{\psi}_r, w_r; \theta) = F(w_r;\theta) - A(u_r, w_r;\theta), \quad \forall w_r \in W_r,
\end{equation} 
whose algebraic system is given by
\begin{equation}
\bA_r^{\psi}(\theta) \hat{\bspsi}_r = \bsf_r^\psi(\theta) - \bA_r^{u, \psi}(\theta) \bsu_r,
\end{equation}
where $\hat{\bspsi}_r$ is the coefficient of $\hat{\psi}_r$ in the basis of $W_r$, $\bA_r^\psi(\theta)$, $\bsf_r^\psi(\theta)$, and $\bA_r^{u, \psi}(\theta)$ are given in \eqref{eq:A-r-psi}, \eqref{eq:frpsi}, and \eqref{eq:Arupsi}, respectively.
By setting the variation of the Lagrangian w.r.t.\ $u_r$ as zero, we have the incremental state problem: find $\hat{u}_r \in V_r$ such that
\begin{equation}
A(v_r, \hat{u}_r;\theta) = - A(v_r, \psi_r;\theta) -\partial_u \eta_y|_{u_r}(v_r)  - \partial_{u}^2 \eta_y|_{u_r}(\hat{\psi}_r, v_r), \quad \forall v_r \in V_r,
\end{equation}
whose algebraic system is given by
\begin{equation}
\bA_r^u(\theta) \hat{\bsu}_r = - (\bA_r^{u, \psi}(\theta))^T \bspsi_r - (y - \bsu_r^T \bO_r^u)^T \Gamma^{-1} \bO_r^u - \hat{\bspsi}_r^T \bO_r^\psi \Gamma^{-1} \bO_r^u,
\end{equation}
where $\hat{\bsu}_r$ is the coefficient vector of $\hat{u}_r$ in the basis of $V_r$. 
Consequently, we obtain the gradient 
\begin{equation}\label{eq:etagraddelta}
\begin{split}
\nabla_\theta \eta_y^\Delta(\theta) 
&= \partial_\theta \cL_\eta(u_r, \psi_r, \hat{u}_r, \hat{\psi}_r; \theta)\\
& = \partial_\theta A(u_r, \psi_r;\theta) - \partial_\theta  F(\psi_r;\theta)  \\
& + \partial_\theta A(u_r, \hat{u}_r;\theta) - \partial_\theta  F(\hat{u}_r;\theta) + \partial_\theta A(\hat{\psi}_r, \psi_r;\theta)\\
& = \bspsi^T_r \partial_\theta \bA_r^{u, \psi}(\theta) \bsu_r - \bspsi_r^T \partial_\theta \bsf_r^\psi(\theta)\\
& + \hat{\bsu}_r^T \partial_\theta \bA_r^{u}(\theta) \bsu_r - \hat{\bsu}_r^T \partial_\theta \bsf_r^u(\theta) + \bspsi_r^T \partial_\theta \bA_r^\psi(\theta) \hat{\bspsi}_r
\end{split}
\end{equation}

\subsection{Affine decomposition}

For the sake of computational reduction enabled by parameter affine decomposition, we assume that
for any $\theta \in \Theta$, the parametric bilinear form $A(\cdot, \cdot;\theta): V \times V \to \bR$ and linear form $F(\cdot:\theta): V \to \bR$ admit the $\theta$-affine decomposition 
\begin{equation}\label{eq:affine}
A(w, v;\theta) = \sum_{j=1}^{J_A} c_j^A(\theta) A_j(w,v) \text{ and } F(v; \theta) = \sum_{j = 1}^{J_F} c_j^F(\theta) F_j(v),
\end{equation}
for some $J_A, J_F \in \bN$, continuously differentiable functions $c_j^A(\theta), c_j^F(\theta)$ w.r.t.\ $\theta$, and $\theta$-independent bilinear form $A_j$ and linear form $F_j$, 
We remark that if the affine decomposition \eqref{eq:affine} is not satisfied, we can apply additional approximation or hyper reduction, e.g., by empirical interpolation \cite{BarraultMadayNguyenEtAl04, ChaturantabutSorensen10}, to achieve the affine decomposition. 

Under the the affine decomposition \eqref{eq:affine}, we have 
\begin{equation}
\bA_r^u(\theta) = \sum_{j = 1}^{J_A} c_j^A(\theta) \bA_{j}^u, \quad \bA_r^\psi(\theta) = \sum_{j = 1}^{J_A} c_j^A(\theta) \bA_{j}^\psi, \quad \bA_r^{u, \psi}(\theta) = \sum_{j = 1}^{J_A} c_j^A(\theta) \bA_{j}^{u,\psi}, 
\end{equation}
where 
\begin{equation}
(\bA_j^u)_{m,n} = A_j(\phi_u^n, \phi_u^m), \quad (\bA_j^\psi)_{m,n} = A_j(\phi_\psi^n, \phi_\psi^m), \quad (\bA_j^{u,\psi})_{m,n} = A_j(\phi_\psi^n, \phi_u^m)
\end{equation}
are computed only once, and $\bA_r^u(\theta)$, $\bA_r^\psi(\theta)$, $\bA_r^{u, \psi}(\theta)$ are assembled for each $\theta \in \Theta$ with $J_A (N_r^u)^2$, $J_A (N_r^\psi)^2$, $J_A N_r^u N_r^\psi$ operations, respectively, which are independent of the high-fidelity degrees of freedom $N_h$. Similarly, we have the decomposition
\begin{equation}
\bsf_r^u(\theta) = \sum_{j=1}^{J_F} c_j^F(\theta) \bsf^u_j, \quad \bsf_r^\psi(\theta) = \sum_{j=1}^{J_F} c_j^F(\theta) \bsf^\psi_j,
\end{equation}
where 
\begin{equation}
(\bsf^u_j)_m = F_j(\phi_u^m), \quad (\bsf^\psi_j)_m = F_j(\phi_\psi^m),
\end{equation}
are computed only once, and $\bsf_r^u(\theta)$ and $\bsf_r^\psi(\theta)$ are assembled for each $\theta \in \Theta$ with $J_F N_r^u$ and $J_F N_r^\psi$ operations, respectively, which are independent of $N_h$.
Similarly, all the gradients w.r.t.\ the parameter $\theta$ can be efficiently decomposed.

\subsection{Adaptive greedy algorithm}

To construct the basis functions of the reduced basis space $V_r$ and $W_r$, which are used to compute the gradient of the log-posterior $\nabla_\theta \log(p_y(\theta_m^l))$ in the SVGD Algorithm \ref{alg:SVGD} for all $\theta_m^l$, $m = 1, \dots, M$, $l = 1, 2, \dots$,
we propose an adaptive greedy algorithm along the progressive construction of the transport map by SVGD, which consists of the key elements: 
\begin{enumerate}
    \item At the initial step, we use the initial samples $\theta_m^0$, $m=1, \dots, M$ in Algorithm \ref{alg:SVGD}, drawn from the prior distribution, as the training samples, and use a classical greedy algorithm with the error indicator $\triangle_r^\eta$ in \eqref{eq:Delta} and tolerance $\varepsilon_r^0$ to construct the reduced order models with the reduced basis spaces $V_r$ and $W_r$.
    \item Then, for a given update criterion being satisfied at step $l = 0, l_1, l_2, \dots, $ e.g., $l_i = i K$ for some $K \in \bN$, we take the samples $\theta_m^l$, $m = 1, \dots, M$, as the training samples and run the greedy algorithm with tolerance $\varepsilon_r^l$ to adaptively enrich the reduced order models and the reduced basis spaces $V_r$ and $W_r$. 
    \item As empirical distribution of the samples $\theta_m^l$, $m = 1, \dots, M$, approaches the true posterior with increasing $l$, we propose to decrease the tolerance $\varepsilon_r^l$ such that the RB approximations are inexpensive to obtain for small $l$ when the samples are far from the posterior, and become more accurate when the samples become closely distributed as the posterior. The tolerance $\varepsilon_r^l$ can be decreased, e.g., according to error indicator $t_l$ in \eqref{eq:t-l}, e.g., $\varepsilon_r^l = \varepsilon_r^l t_l/t_0$.
\end{enumerate}
This adaptive construction process is summarized in Algorithm \ref{alg:adaptive-greedy}.  


\begin{algorithm} 
\caption{Adaptive greedy algorithm} 
\label{alg:adaptive-greedy} 
\begin{algorithmic}[1] 
\STATE \textbf{Input:} random samples $\theta_m^0 \sim \mu_0$, $m = 1, \dots, M$, tolerance $\varepsilon_r^0$, update step $k$.
\STATE \textbf{Output:} Stein samples $\theta_m$, $m = 1, \dots, M$.
\STATE Initialization: at $\theta = \theta_1^0$, solve the high-fidelity problems \eqref{eq:state-HiFi} and \eqref{eq:adjoint-HiFi} for $u_h$ and $\psi_h$, set $V_r = \text{span}\{\phi_u^1\}$ with $\phi_u^1 = u_h/||u_h||_V$ and $W_r = \text{span}\{\phi_\psi^1\}$ with $\phi_\psi^1 = \psi_h/||\psi_h||_V$, compute the reduced matrices and vectors in Section \ref{sec:RB}.
    \WHILE{at step $l = 0, l_1, l_2, \dots, $ between line 4 and 5 in Algorithm \ref{alg:SVGD}}
        \STATE Compute the error indicator $\triangle_r^\eta(\theta_m^l)$ in \eqref{eq:Delta} for $m = 1, \dots, M$.
        \WHILE{$\max_{m = 1, \dots, M}|\triangle_r^\eta(\theta_m^l)| > \varepsilon_r$}
            \STATE Choose $\theta = \argmax_{\theta_m^l, m =1, \dots, M} |\triangle_r^\eta(\theta_m^l)|$.
            \STATE Solve the high-fidelity problems \eqref{eq:state-HiFi} and \eqref{eq:adjoint-HiFi} for $u_h$ and $\psi_h$ at $\theta$.
            \STATE Enrich the spaces $V_r = V_r \bigoplus \text{span}\{u_h\}$, $W_r = W_r \bigoplus \text{span}\{\psi_h\}$.
            \STATE Compute all the reduced matrices and vectors in Section \ref{sec:RB}.
            \STATE Compute the error indicator $\triangle_r^\eta(\theta_m^l)$ in \eqref{eq:Delta} for $m = 1, \dots, M$.
        \ENDWHILE
        \STATE Perform line 5 -- 8 of the SVGD Algorithm \ref{alg:SVGD} with RB approximations.
        \STATE Update the tolerance $\varepsilon_r^l$ according to $t_l$ in Algorithm \ref{alg:SVGD}.
    \ENDWHILE
\end{algorithmic}
\end{algorithm}


\section{Error estimates}
\label{sec:errorestimates}

In this section, we present estimates for the errors between the high-fidelity approximations and the reduced basis approximations of 
the potential $\eta_y(u(\theta))$ and its gradient $\nabla_\theta \eta_y(u(\theta))$, as well as the posterior distribution $\mu_y(\theta)$. We leave the estimate for the errors committed to the samples in Appendix \ref{sec:estimates4samples}.

\subsection{Well-posedness and stability estimates}
We make the following assumptions for the parametric problem \eqref{eq:state}.
\begin{assumption}\label{ass:AF} We assume that the parametric bilinear form $A(\cdot, \cdot;\theta): V \times V \to \bR$ and linear form $F(\cdot;\theta):V \times \bR \to \bR$ satisfy 
\begin{itemize}
    \item[A1] At any $\theta \in \Theta$, there exist a coercivity constant $\alpha(\theta) > 0$ and a continuity constant $\gamma(\theta) > 0$ such that 
    \begin{equation}
    \alpha(\theta) ||w||_V^2 \leq A(w, w; \theta) \text{ and } A(w, v; \theta) \leq \gamma(\theta)||w||_V ||v||_V, \; \forall w, v \in V.
    \end{equation}
    The linear functional $F(\cdot; \theta): V \to \bR$ is bounded with norm 
    \begin{equation}
    ||F(\cdot; \theta)||_{V'} < \infty.
    \end{equation}
    \item[A2] Moreover, $A(\cdot, \cdot; \theta)$ and $F(\cdot; \theta)$ are continuously differentiable w.r.t.\ $\theta$ at every $\theta \in \Theta$, and for each $j = 1, \dots, d$, there exists $0 < \rho_j(\theta) < \infty $ such that
    \begin{equation}
    \partial_{\theta_j} A(w, v; \theta) \leq \rho_j(\theta) ||w||_V ||v||_V, \; \forall w, v \in V, \text{ and }  ||\partial_{\theta_j} F(\cdot; \theta)||_{V'} < \infty.
    \end{equation}
\end{itemize}
\end{assumption}

\begin{lemma}
Under Assumption \ref{ass:AF}, for any $\theta \in \Theta$ there exists a unique solution $u(\theta) \in V$ which satisfies the stability estimate
\begin{equation}\label{eq:stability-u}
||u(\theta)||_V \leq \frac{||F(\cdot;\theta)||_{V'}}{\alpha(\theta)} = : C_u(\theta).
\end{equation}
Moreover, there exists a unique solution $\psi(\theta) \in V$ of the adjoint problem \eqref{eq:adjoint} for each $\theta \in \Theta$, which satisfies the stability estimate
\begin{equation}\label{eq:stability-psi}
||\psi(\theta)||_V \leq \frac{C_y}{\alpha(\theta)} + \frac{C_\cO}{\alpha(\theta)} C_u(\theta) =: C_\psi(\theta),
\end{equation}
where the constants $C_y$ and $C_\cO$ are defined as 
\begin{equation}\label{eq:Gamma-cO-y}
C_y := ||\Gamma^{-1}||_2 ||\cO||_{V'}||y||_2 \quad \text{ and } C_\cO := ||\Gamma^{-1}||_2 ||\cO||_{V'}^2,
\end{equation}
where 
$||\Gamma^{-1}||_2$ is the spectral norm of $\Gamma^{-1}$, $||y||_2$ is the Eculidean norm of $y$, and 
$||\cO||_{V'} := \left(||o_1||_{V'}^2 + \cdots ||o_s||_{V'}^2\right)^{1/2}$ for the observation functional $o_1, \dots, o_s \in V'$.

Furthermore, \eqref{eq:stability-u} holds for the state solutions $u_h$ of \eqref{eq:state-HiFi} and $u_r$ of \eqref{eq:state-RB}, while \eqref{eq:stability-psi} holds for the adjoint solutions $\psi_h$ of \eqref{eq:adjoint-HiFi} and $\psi_r$ of \eqref{eq:adjoint-RB}. 
\end{lemma}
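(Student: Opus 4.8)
The plan is to treat both the state and adjoint problems as instances of the Lax--Milgram setting and then to propagate the resulting energy estimates verbatim to the finite element and reduced basis subspaces. For the state, I would first invoke Assumption \ref{ass:AF}: the coercivity and continuity of $A(\cdot,\cdot;\theta)$ in A1, together with the boundedness of $F(\cdot;\theta)$, furnish via the Lax--Milgram theorem a unique $u(\theta)\in V$ solving \eqref{eq:state}. To obtain the stability estimate I would test with $v=u(\theta)$, yielding $\alpha(\theta)||u(\theta)||_V^2 \leq A(u,u;\theta)=F(u;\theta)\leq ||F(\cdot;\theta)||_{V'}||u(\theta)||_V$, and then divide by $||u(\theta)||_V$ (the case $u=0$ being trivial) to arrive at \eqref{eq:stability-u}.

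The adjoint is slightly more delicate because the unknown $\psi$ sits in the second slot of $A$. The first step is to recognize that $B(\psi,w;\theta):=A(w,\psi;\theta)$ is again coercive and continuous with the same constants $\alpha(\theta),\gamma(\theta)$, since $B(w,w;\theta)=A(w,w;\theta)$; this legitimizes Lax--Milgram for the transposed form. The key computation is to bound the right-hand side functional $w\mapsto -\nabla_u\eta_y|_u(w)=(y-\cO(u))^T\Gamma^{-1}\cO(w)$, which is linear in $w$ by \eqref{eq:nabla_u-eta}, in the dual norm $||\cdot||_{V'}$. Using $||\cO(w)||_2\leq ||\cO||_{V'}||w||_V$, which follows from $||\cO(w)||_2^2=\sum_i o_i(w)^2\leq \sum_i ||o_i||_{V'}^2||w||_V^2$, together with the triangle inequality $||y-\cO(u)||_2\leq ||y||_2+||\cO||_{V'}C_u(\theta)$, I would bound the functional norm by $C_y+C_\cO C_u(\theta)$ with $C_y,C_\cO$ as in \eqref{eq:Gamma-cO-y}. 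Lax--Milgram then yields a unique $\psi(\theta)$, and testing the adjoint equation with $w=\psi$ delivers $\alpha(\theta)||\psi||_V^2\leq (C_y+C_\cO C_u(\theta))||\psi||_V$, i.e. \eqref{eq:stability-psi} after division.

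For the high-fidelity and reduced basis versions I would exploit that $V_r\subset V_h\subset V$ are finite-dimensional, hence closed, subspaces. Coercivity and continuity restrict to any subspace with unchanged constants, so Lax--Milgram applies on $V_h$ and $V_r$ and the identical energy arguments reproduce \eqref{eq:stability-u} and \eqref{eq:stability-psi}. The only point requiring care is that the adjoint right-hand side in \eqref{eq:adjoint-RB} is built from $u_r$ rather than $u$; but since the inherited state estimate gives $||u_r||_V\leq C_u(\theta)$, the same bound $C_y+C_\cO C_u(\theta)$ applies, and likewise for $u_h$.

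The main obstacle, such as it is, is the bookkeeping in the adjoint right-hand side bound: correctly identifying the aggregate observation norm $||\cO||_{V'}$ and threading $C_u(\theta)$ through so that the constants match \eqref{eq:Gamma-cO-y} exactly. Everything else reduces to a routine application of Lax--Milgram and energy testing, with the subspace inclusions making the discrete cases immediate.
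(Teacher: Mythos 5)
Your proposal is correct and follows the same route as the paper, which simply invokes the Lax--Milgram theorem for both problems and the subspace inclusions $V_r, W_r \subset V_h \subset V$ to transfer the estimates to the discrete settings. You usefully spell out details the paper leaves implicit --- the coercivity of the transposed form, the dual-norm bound $C_y + C_\cO C_u(\theta)$ on the adjoint right-hand side, and the observation that the RB adjoint data involves $u_r$ but inherits the same bound since $\|u_r\|_V \le C_u(\theta)$ --- but these are exactly the computations the paper's one-line proof is implicitly relying on.
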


\begin{proof}
The results of well-posedness and estimates \eqref{eq:stability-u} and \eqref{eq:stability-psi} are obtained by a direct application of Lax--Migram theorem. By construction $V_r \subset V_h$, $W_r \subset V_h $, and $V_h \subset V$, which make A1 Assumption \ref{ass:AF} hold in $V_r$, $W_r$, and $V_h$, therefore the same stability estimate \eqref{eq:stability-u} holds for $u_h$ and $u_r$, and \eqref{eq:stability-psi} holds for $\psi_h$ and $\psi_r$.
\end{proof}


\begin{lemma}\label{lemma:nabla-u-phi}
Under Assumption \ref{ass:AF}, for any $\theta \in \Theta$ we have $\nabla_\theta u(\theta) \in V^d := \bigotimes_{i=1}^d V$ with $||\nabla_\theta u(\theta) ||_{V^d} := \sum_{j=1}^d ||\partial_{\theta_j} u(\theta)||_V$, and there holds
\begin{equation}\label{eq:partial-theta-u}
||\nabla_\theta u(\theta) ||_{V^d} \leq \frac{C_u(\theta)}{\alpha(\theta)} \sum_{j = 1}^d \rho_j(\theta) + \sum_{j = 1}^d \frac{||\partial_{\theta_j}F(\cdot;\theta)||_{V'}}{\alpha(\theta)},
\end{equation}
with $C_u(\theta)$ defined in \eqref{eq:stability-u}. Moreover, we have $\nabla_\theta \psi(\theta) \in V^d$, which satisfies 
\begin{equation}\label{eq:partial-theta-psi}
||\nabla_\theta \psi(\theta) ||_{V^d} \leq \frac{C_\psi(\theta)}{\alpha(\theta)} \sum_{j = 1}^d \rho_j(\theta) + \frac{d C_y}{\alpha(\theta)}  + \frac{C_\cO}{\alpha(\theta)} ||\nabla_\theta u(\theta)||_{V^d},
\end{equation}
where  $C_\psi(\theta)$ is defined in \eqref{eq:stability-psi}, $C_y$ and $C_\cO$ are defined in \eqref{eq:Gamma-cO-y}.
The same estimate \eqref{eq:partial-theta-u} holds for $\nabla_\theta u_h(\theta)$ and $\nabla_\theta u_r(\theta)$, and \eqref{eq:partial-theta-psi} holds for $\nabla_\theta \psi_h(\theta) $ and $\nabla_\theta \psi_r(\theta)$.
\end{lemma}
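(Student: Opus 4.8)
The plan is to read both bounds as standard a priori estimates for the parameter-sensitivity (tangent) problems obtained by differentiating the weak forms \eqref{eq:state} and \eqref{eq:adjoint} in $\theta_j$ and then invoking the coercivity and continuity hypotheses of Assumption \ref{ass:AF}. Before any estimate, I would first justify that the solution maps are differentiable with derivatives in $V$: since $A(\cdot,\cdot;\theta)$ is coercive (A1) and both $A$ and $F$ are continuously differentiable in $\theta$ (A2), the map $\theta \mapsto u(\theta)$ is $C^1$ (implicit function theorem applied to the invertible linear problem), so that each $\partial_{\theta_j} u(\theta) \in V$ and hence $\nabla_\theta u(\theta) \in V^d$; the same reasoning applies to $\psi(\theta)$.

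For \eqref{eq:partial-theta-u}, I would differentiate \eqref{eq:state} at a fixed test function $v$, using linearity of $A$ in its first slot and the chain rule, to get the tangent state problem
\[
A(\partial_{\theta_j} u, v;\theta) = \partial_{\theta_j} F(v;\theta) - \partial_{\theta_j} A(u, v;\theta), \quad \forall v \in V.
\]
Testing with $v = \partial_{\theta_j} u$ gives $\alpha(\theta)||\partial_{\theta_j}u||_V^2$ on the left by coercivity, while A2 and the dual-norm bound give the right-hand side $\le (||\partial_{\theta_j}F(\cdot;\theta)||_{V'} + \rho_j(\theta)||u||_V)\,||\partial_{\theta_j}u||_V$. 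Dividing through and inserting $||u||_V \le C_u(\theta)$ from \eqref{eq:stability-u} yields the per-component estimate, and summing over $j = 1,\dots,d$ produces \eqref{eq:partial-theta-u}.

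For \eqref{eq:partial-theta-psi}, I would differentiate the adjoint weak form \eqref{eq:adjoint} in $\theta_j$; since $\cO$ is linear, $\partial_{\theta_j}\cO(u) = \cO(\partial_{\theta_j}u)$ and the constant data $y$ drops out, leaving the tangent adjoint problem
\[
A(w, \partial_{\theta_j}\psi;\theta) = -\cO(\partial_{\theta_j}u)^T\Gamma^{-1}\cO(w) - \partial_{\theta_j}A(w,\psi;\theta), \quad \forall w \in V.
\]
Testing with $w = \partial_{\theta_j}\psi$ and using coercivity, then bounding the misfit term by $C_\cO ||\partial_{\theta_j}u||_V ||\partial_{\theta_j}\psi||_V$ (via $||\cO(v)||_2 \le ||\cO||_{V'}||v||_V$ and $||\Gamma^{-1}||_2$) and the explicit-derivative term by $\rho_j(\theta)||\psi||_V||\partial_{\theta_j}\psi||_V$, and inserting $||\psi||_V \le C_\psi(\theta)$ from \eqref{eq:stability-psi}, gives $||\partial_{\theta_j}\psi||_V \le \alpha(\theta)^{-1}(\rho_j(\theta)C_\psi(\theta) + C_\cO||\partial_{\theta_j}u||_V)$. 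Summing over $j$ yields the $\frac{C_\psi}{\alpha}\sum_j\rho_j$ and $\frac{C_\cO}{\alpha}||\nabla_\theta u||_{V^d}$ contributions of \eqref{eq:partial-theta-psi}; note that this sharp bound carries no $dC_y/\alpha(\theta)$ term (the data $y$ having cancelled), so the stated inequality holds a fortiori since that term is nonnegative.

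The high-fidelity and reduced-basis versions follow from the identical argument carried out verbatim on the $\theta$-independent subspaces $V_h$, $V_r$, and $W_r$: because these spaces do not depend on $\theta$, differentiation commutes with the Galerkin projection, the tangent problems are simply the restrictions of the above to those spaces, and A1 holds on each of them (as already noted in the proof of the preceding lemma), so $C_u$, $C_\psi$, $C_\cO$, and $\rho_j$ are unchanged. I expect the only genuine subtleties, rather than true obstacles, to be two bookkeeping points: first, rigorously establishing differentiability of the solution maps so that $\nabla_\theta u, \nabla_\theta \psi \in V^d$ are well defined (handled by the implicit function theorem under A1--A2); and second, in the adjoint tangent problem the parameter derivative sits in the \emph{second} argument of $A$, so coercivity must be applied by testing against $\partial_{\theta_j}\psi$ itself, while the coupling to $\nabla_\theta u$ through the observation operator $\cO$ is precisely what generates the $\frac{C_\cO}{\alpha}||\nabla_\theta u||_{V^d}$ term.
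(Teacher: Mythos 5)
Your proposal is correct and follows essentially the same route as the paper's proof: differentiate the weak forms in $\theta_j$ to obtain tangent problems for $\partial_{\theta_j}u$ and $\partial_{\theta_j}\psi$, apply coercivity (equivalently Lax--Milgram stability) together with A2, insert the stability bounds $||u||_V \leq C_u(\theta)$ and $||\psi||_V \leq C_\psi(\theta)$, and sum over $j$. The paper justifies differentiability by passing to the limit in difference quotients and identifying the limit as the unique Lax--Milgram solution of the tangent problem, where you invoke the implicit function theorem; these are interchangeable technical devices. Your handling of the high-fidelity and reduced-basis versions (the spaces $V_h$, $V_r$, $W_r$ are $\theta$-independent, so the identical argument restricts to them) also matches the paper's one-line closing remark.

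The one substantive divergence is in the adjoint tangent problem, and there your derivation is the sharper of the two. The paper's differentiated adjoint equation \eqref{eq:adjoint-partial-theta} carries the right-hand side $(y - \cO(\partial_{\theta_j} u(\theta)))^T \Gamma^{-1} \cO(w)$, i.e., the constant data $y$ survives the differentiation; this is a slip (only $-\cO(\partial_{\theta_j}u(\theta))^T\Gamma^{-1}\cO(w)$ should remain, exactly as you computed), and it is precisely the source of the $d\,C_y/\alpha(\theta)$ term in \eqref{eq:partial-theta-psi}. Your argument therefore proves the strictly stronger estimate
\begin{equation*}
||\nabla_\theta \psi(\theta)||_{V^d} \leq \frac{C_\psi(\theta)}{\alpha(\theta)}\sum_{j=1}^d \rho_j(\theta) + \frac{C_\cO}{\alpha(\theta)}||\nabla_\theta u(\theta)||_{V^d},
\end{equation*}
and your a fortiori remark correctly reconciles this with the lemma as stated, since $d\,C_y/\alpha(\theta) \geq 0$. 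What your route buys is a cleaner bound (no spurious dependence on the data $y$ in the sensitivity estimate); what the paper's statement buys, inadvertently, is only slack. No gap on your side.
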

The proof of this lemma is in Appendix \ref{sec:proof-lemma-1}.

\subsection{Error estimates for the potential $\eta_y(u)$ and its gradient $\nabla_\theta \eta_y(u)$}
For notational convenience, let $e_r^u(\theta)$ and $e_r^\psi(\theta)$ denote the reduced basis approximation errors of the state and adjoint, i.e., 
\begin{equation}
e_r^u(\theta): = u_h(\theta) - u_r(\theta), \quad \text{ and } e_r^\psi(\theta) : = \psi_h(\theta) - \psi_r(\theta),
\end{equation}
for which and their gradients $\nabla_\theta e_r^u$ and $\nabla_\theta e_r^\psi$, the a-posteriori error estimates are presented in Appendix \ref{sec:upsigrad}.
We denote the high-fidelity and reduced basis approximations of $\eta_y(u(\theta))$ as 
\begin{equation}
\eta^h_y(\theta) = \eta_y(u_h(\theta)), \text{ and } \eta_y^r(\theta) = \eta_y(u_r(\theta)),
\end{equation}
and denote the reduced basis approximation errors as 
\begin{equation}
e_r^\eta(\theta) := \eta_y^h(\theta) - \eta_y^r(\theta), \quad \text{ and } e_r^\Delta(\theta) = \eta_y^h(\theta) - \eta_y^\Delta(\theta), 
\end{equation}
where $\eta_y^\Delta$ is defined in \eqref{eq:dwreta}. 

\begin{lemma}\label{lemma:bound-eta-h-r}
Under Assumption \ref{ass:AF}, for any $\theta \in \Theta$, we have
\begin{equation}\label{eq:etabound}
|e_r^\eta(\theta)| \leq (C_y + C_\cO C_u(\theta)) ||e_r^u(\theta)||_V,
\end{equation}
and 
\begin{equation}\label{eq:e-eta-Delta-eta}
|e_r^\Delta(\theta)| \leq  \gamma(\theta) ||e_r^u(\theta)||_V ||e_r^\psi(\theta)||_V + \frac{1}{2} C_\cO ||e_r^u(\theta)||_V^2,
\end{equation}
where the constants $C_y$ and $C_\cO$ are defined in \eqref{eq:Gamma-cO-y}, $C_u(\theta)$ is defined in \eqref{eq:stability-u}. 

\end{lemma}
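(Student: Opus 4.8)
The plan is to prove the two estimates separately, both exploiting that the observation operator $\cO$ is linear, so that $\cO(u_h) - \cO(u_r) = \cO(e_r^u)$, and that $\eta_y$ is quadratic in $u$. For the first bound \eqref{eq:etabound}, I would expand $\eta_y(u_h) - \eta_y(u_r)$ via a polarization-type identity for the symmetric form $\Gamma^{-1}$: setting $a := y - \cO(u_h)$ and $b := y - \cO(u_r)$, one has $b - a = \cO(e_r^u)$ and $a^T\Gamma^{-1}a - b^T\Gamma^{-1}b = (a-b)^T\Gamma^{-1}(a+b)$, hence $e_r^\eta = -\tfrac12\,\cO(e_r^u)^T\Gamma^{-1}(a+b)$. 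Bounding by Cauchy--Schwarz with the spectral norm of $\Gamma^{-1}$, using $\|\cO(w)\|_2 \le \|\cO\|_{V'}\|w\|_V$ and $\|a+b\|_2 \le 2\|y\|_2 + \|\cO\|_{V'}(\|u_h\|_V + \|u_r\|_V) \le 2\|y\|_2 + 2\|\cO\|_{V'}C_u(\theta)$ from the stability estimate \eqref{eq:stability-u}, yields exactly $(C_y + C_\cO C_u(\theta))\|e_r^u\|_V$ after collecting the constants from \eqref{eq:Gamma-cO-y}.

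For the second bound \eqref{eq:e-eta-Delta-eta}, the strategy is the standard dual-weighted-residual cancellation. Since $\eta_y$ is quadratic, Taylor's expansion about $u_r$ is exact: $\eta_y(u_h) - \eta_y(u_r) = \nabla_u\eta_y|_{u_r}(e_r^u) + \tfrac12\,\cO(e_r^u)^T\Gamma^{-1}\cO(e_r^u)$. Next, because $\psi_r \in W_r \subset V_h$ is an admissible test function in the high-fidelity state equation \eqref{eq:state-HiFi}, one gets $F(\psi_r;\theta) = A(u_h,\psi_r;\theta)$, so the residual \eqref{eq:Delta} collapses to $\Delta_r^\eta(\theta) = A(u_r - u_h,\psi_r;\theta) = -A(e_r^u,\psi_r;\theta)$. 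Then, since $e_r^u = u_h - u_r \in V_h$ is admissible in the high-fidelity adjoint equation \eqref{eq:adjoint-HiFi}, one has $A(e_r^u,\psi_h;\theta) = (y - \cO(u_h))^T\Gamma^{-1}\cO(e_r^u)$ by \eqref{eq:nabla_u-eta}. Writing $\psi_r = \psi_h - e_r^\psi$ and combining these identities, the first-order term $\nabla_u\eta_y|_{u_r}(e_r^u)$ and the linear part of the residual cancel, leading to the clean exact expression $e_r^\Delta = -\tfrac12\,\cO(e_r^u)^T\Gamma^{-1}\cO(e_r^u) - A(e_r^u,e_r^\psi;\theta)$. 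The estimate then follows by bounding the first term by $\tfrac12 C_\cO\|e_r^u\|_V^2$ (via the spectral norm of $\Gamma^{-1}$ and $\|\cO\|_{V'}$) and the second by continuity, $|A(e_r^u,e_r^\psi;\theta)| \le \gamma(\theta)\|e_r^u\|_V\|e_r^\psi\|_V$ from Assumption \ref{ass:AF}.

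The delicate point, and the main obstacle, is the bookkeeping in the second bound: the reduced adjoint $\psi_r$ is built from $\nabla_u\eta_y|_{u_r}$ evaluated at the \emph{reduced} state, whereas the high-fidelity adjoint $\psi_h$ uses $\nabla_u\eta_y|_{u_h}$, and this mismatch $(y - \cO(u_r))$ versus $(y - \cO(u_h))$ produces precisely a term $-\cO(e_r^u)^T\Gamma^{-1}\cO(e_r^u)$ that must be tracked so that it combines with the $+\tfrac12\,\cO(e_r^u)^T\Gamma^{-1}\cO(e_r^u)$ from the exact Taylor remainder to leave the coefficient $-\tfrac12$. Keeping the signs straight and correctly identifying which error is an admissible test function in which high-fidelity equation (the Galerkin-orthogonality substitutions) is where care is required; once the exact identity for $e_r^\Delta$ is in hand, the final estimate is an immediate application of Cauchy--Schwarz and the continuity constant $\gamma(\theta)$.
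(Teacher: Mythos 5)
Your proposal is correct and follows essentially the same route as the paper: the first bound is the paper's add-and-subtract of the cross term, which your polarization identity $a^T\Gamma^{-1}a - b^T\Gamma^{-1}b = (a-b)^T\Gamma^{-1}(a+b)$ reproduces exactly, and the second bound arrives at the paper's key identity $e_r^\Delta = -A(e_r^u, e_r^\psi;\theta) - \tfrac{1}{2}\,\cO(e_r^u)^T\Gamma^{-1}\cO(e_r^u)$ via the same Galerkin substitutions $\Delta_r^\eta = -A(e_r^u,\psi_r;\theta)$ and $A(e_r^u,\psi_h;\theta) = -\nabla_u\eta_y|_{u_h}(e_r^u)$. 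The only cosmetic difference is that you Taylor-expand the quadratic $\eta_y$ about $u_r$ and track the resulting $u_r$-versus-$u_h$ mismatch in the adjoint data, whereas the paper expands about $u_h$ so that the first-order term cancels against the high-fidelity adjoint equation directly; both yield the identical exact error representation.
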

\begin{proof}
By definition of the high-fidelity approximation and reduced basis approximation of the potential $\eta_y$ in \eqref{eq:eta-HiFi} and \eqref{eq:eta-RB}, we have 
\begin{equation}
\begin{split}
& \eta_y(u_h) - \eta_y(u_r)  \\
& = \frac{1}{2} (y - \cO(u_h))^T \Gamma^{-1} (y - \cO(u_h))  - \frac{1}{2} (y - \cO(u_h))^T \Gamma^{-1} (y - \cO(u_r)) \\
& + \frac{1}{2} (y - \cO(u_h))^T \Gamma^{-1} (y - \cO(u_r)) - \frac{1}{2} (y - \cO(u_r))^T \Gamma^{-1} (y - \cO(u_r)),
\end{split}
\end{equation}
where the first two terms can be bounded by 
\begin{equation}
\frac{1}{2} ||\Gamma^{-1}||_2 ||\cO||_{V'} ||e_r^u||_V (||y||_2 + ||\cO||_{V'} ||u_h||_V),
\end{equation}
while the last two terms can be bounded by 
\begin{equation}
\frac{1}{2} ||\Gamma^{-1}||_2 ||\cO||_{V'} ||e_r^u||_V (||y||_2 + ||\cO||_{V'} ||u_r||_V),
\end{equation}
which, together with the stability estimates \eqref{eq:stability-u} for $u_h$ and $u_r$, concludes.



By Taylor expansion of $\eta_y(u_r)$ at $u_h$, which is quadratic w.r.t.\ $u_h$, we have 
\begin{equation}
\eta_y(u_r) - \eta_y(u_h) = - \nabla_u \eta_y|_{u_h} (e_r^u) + \frac{1}{2} \nabla_u^2 \eta_y|_{u_h}(e_r^u, e_r^u),
\end{equation}
For the first term, by the adjoint high-fidelity problem \eqref{eq:adjoint-HiFi}, and $e_r^u \in V_h$, we have 
\begin{equation}
-\nabla_u \eta_y|_{u_h} (e_r^u) = A(e_r^u, \psi_h; \theta).
\end{equation}
Moreover, by the definition of $\Delta_r^\eta$ in \eqref{eq:Delta} and the state high-fidelity problem \eqref{eq:state-HiFi} with $\psi_r \in W_r \subset V_h$, we have 
\begin{equation}
\Delta_r^\eta(\theta) = A(u_r, \psi_r; \theta) - A(u_h, \psi_r; \theta) = -A(e_r^u, \psi_r; \theta). 
\end{equation}
Therefore, by the definition $e_r^\Delta(\theta) = \eta_y(u_h) - \eta_y(u_r) - \Delta_r^\eta(\theta)$ we have
\begin{equation}\label{eq:e-delta-r}
e_r^\Delta(\theta) =  - A(e_r^u, e_r^\psi; \theta) -  \frac{1}{2} \nabla_u^2 \eta_y|_{u_h}(e_r^u, e_r^u)
\end{equation}
which concludes under Assumption \ref{ass:AF}.


\end{proof}



\begin{lemma}\label{lemma:grad-e-r-eta}
Under Assumption \ref{ass:AF}, for any $\theta \in \Theta$, we have 
\begin{equation}\label{eq:grad-e-eta}
 ||\nabla_\theta e_r^\eta(\theta)||_1  \leq (C_y+C_\cO C_u(\theta))  ||\nabla_{\theta} e_r^u(\theta)||_{V^d} + C_\cO ||\nabla_\theta u_r(\theta)||_{V^d} ||e_r^u(\theta)||_V,
\end{equation}
and 
\begin{equation}\label{eq:grad-e-Delta}
\begin{split}
 ||\nabla_\theta e_r^\Delta(\theta)||_1 & \leq \gamma(\theta) ||\nabla_\theta e_r^u(\theta)||_{V^d} ||e_r^\psi(\theta)||_V  + \gamma(\theta) ||\nabla_\theta e_r^\psi(\theta)||_{V^d} ||e_r^u(\theta)||_V\\
& + \sum_{j=1}^d\rho_j(\theta)  ||e_r^u(\theta)||_V  ||e_r^\psi(\theta)||_V + C_\cO ||e_r^u(\theta)||_V ||\nabla_\theta e_r^u(\theta)||_{V^d}.
\end{split}
\end{equation}
where $||\cdot||_1$ denotes the $\ell_1$-norm, i.e., $||g||_1 = \sum_{j=1}^d |g_j|$ for $g = (g_1, \dots, g_d)\in \bR^d$, and the constants $C_y$ and $C_\cO$ are defined in \eqref{eq:Gamma-cO-y}, $C_u(\theta)$ is defined in \eqref{eq:stability-u}. 
\end{lemma}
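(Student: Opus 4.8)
The plan is to establish the two gradient bounds separately; in each case I would first differentiate the relevant expression for the error componentwise with respect to $\theta_j$, then bound each resulting term and sum over $j = 1, \dots, d$ to assemble the $\ell_1$-norm, relying on continuity and the stability estimates already in hand.

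For the bound \eqref{eq:grad-e-eta} on $\nabla_\theta e_r^\eta$, I would use that $\eta_y$ depends on $\theta$ only through the state $u$, so the chain rule together with the explicit form of $\nabla_u \eta_y$ in \eqref{eq:nabla_u-eta} gives $\partial_{\theta_j}\eta_y(u_h) = -(y-\cO(u_h))^T\Gamma^{-1}\cO(\partial_{\theta_j}u_h)$, and likewise for $u_r$. Subtracting these and inserting the mixed term $\pm(y-\cO(u_h))^T\Gamma^{-1}\cO(\partial_{\theta_j}u_r)$ splits $\partial_{\theta_j}e_r^\eta$ into $-(y-\cO(u_h))^T\Gamma^{-1}\cO(\partial_{\theta_j}e_r^u)$ and $\cO(e_r^u)^T\Gamma^{-1}\cO(\partial_{\theta_j}u_r)$. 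I would bound the first term with Cauchy--Schwarz, the estimate $||\cO(w)||_2\le||\cO||_{V'}\,||w||_V$, and the stability bound $||u_h||_V\le C_u(\theta)$ from \eqref{eq:stability-u}, producing the factor $C_y+C_\cO C_u(\theta)$, and bound the second term analogously to obtain $C_\cO\,||e_r^u||_V\,||\partial_{\theta_j}u_r||_V$. Summing over $j$ yields \eqref{eq:grad-e-eta}.

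For the bound \eqref{eq:grad-e-Delta} on $\nabla_\theta e_r^\Delta$, the natural starting point is the representation \eqref{eq:e-delta-r}, $e_r^\Delta(\theta) = -A(e_r^u,e_r^\psi;\theta)-\tfrac12\nabla_u^2\eta_y|_{u_h}(e_r^u,e_r^u)$. Since $\eta_y$ is quadratic in $u$, its Hessian is the constant form $\nabla_u^2\eta_y(w_1,w_2)=\cO(w_1)^T\Gamma^{-1}\cO(w_2)$, so differentiating the second term produces only $\cO(\partial_{\theta_j}e_r^u)^T\Gamma^{-1}\cO(e_r^u)$. Differentiating the first term requires the product rule applied to the three $\theta$-dependencies of $A(e_r^u,e_r^\psi;\theta)$---the explicit dependence of the bilinear form and the dependence through each of its two arguments---giving $\partial_{\theta_j}A(e_r^u,e_r^\psi;\theta)$, $A(\partial_{\theta_j}e_r^u,e_r^\psi;\theta)$, and $A(e_r^u,\partial_{\theta_j}e_r^\psi;\theta)$. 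I would bound the first using the derivative bound $\rho_j(\theta)$ from Assumption \ref{ass:AF} A2, the second and third using the continuity constant $\gamma(\theta)$ from A1, and the Hessian contribution using $||\cO(w)||_2\le||\cO||_{V'}\,||w||_V$ to produce the factor $C_\cO$. Summing the four terms over $j$ gives exactly \eqref{eq:grad-e-Delta}.

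The individual estimates are routine bilinear bounds; the step that most needs care is the bookkeeping of the product-rule expansion of $A(e_r^u,e_r^\psi;\theta)$, in particular distinguishing the explicit parameter derivative (bounded by $\rho_j$) from the two derivatives acting through the arguments (bounded by $\gamma$), and recognizing that the quadratic structure of $\eta_y$ renders the Hessian $\theta$-independent, so no extra terms involving $\partial_{\theta_j}u_h$ arise. I would also note that $\partial_{\theta_j}e_r^u$, $\partial_{\theta_j}e_r^\psi$, $e_r^u$, and $e_r^\psi$ all lie in $V_h\subset V$ (using Lemma \ref{lemma:nabla-u-phi} for the gradients), so the continuity and differentiability bounds of Assumption \ref{ass:AF}, stated over $V$, apply directly.
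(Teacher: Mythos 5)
Your proposal is correct and follows essentially the same route as the paper's own proof: for \eqref{eq:grad-e-eta} the identical insertion of the mixed term $\pm(y-\cO(u_h))^T\Gamma^{-1}\cO(\partial_{\theta_j}u_r)$ followed by the stability bound $||u_h||_V \le C_u(\theta)$, and for \eqref{eq:grad-e-Delta} the identical differentiation of the representation \eqref{eq:e-delta-r}, with the product rule splitting $A(e_r^u,e_r^\psi;\theta)$ into the two $\gamma(\theta)$-terms and the $\rho_j(\theta)$-term, the observation that the Hessian of $\eta_y$ is a constant bilinear form yielding the $C_\cO$-term, and summation over $j$. Your explicit remarks---that Lemma \ref{lemma:nabla-u-phi} guarantees the parametric derivatives exist and that all error quantities lie in $V_h\subset V$ so the bounds of Assumption \ref{ass:AF} apply---are correct justifications the paper leaves implicit.
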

\begin{proof}
For any $j = 1, \dots, d$, by definition of $\eta_y(\theta)$ and Lemma \ref{lemma:nabla-u-phi}, we have 
\begin{equation}
\begin{split}
& \nabla_{\theta_j} \eta_y^h(\theta) - \nabla_{\theta_j} \eta_y^r(\theta) \\
& = -(y - \cO(u_h(\theta)))^T \Gamma^{-1} \cO(\nabla_{\theta_j} u_h(\theta)) + (y - \cO(u_r(\theta)))^T \Gamma^{-1} \cO(\nabla_{\theta_j} u_r(\theta))\\
& = -(y - \cO(u_h(\theta)))^T \Gamma^{-1} \cO(\nabla_{\theta_j} u_h(\theta)) + (y - \cO(u_h(\theta)))^T \Gamma^{-1} \cO(\nabla_{\theta_j} u_r(\theta))\\
& \quad -(y - \cO(u_h(\theta)))^T \Gamma^{-1} \cO(\nabla_{\theta_j} u_r(\theta)) + (y - \cO(u_r(\theta)))^T \Gamma^{-1} \cO(\nabla_{\theta_j} u_r(\theta)),
\end{split}
\end{equation}
which implies 
\begin{equation}
\begin{split}
|\nabla_{\theta_j} e_r^\eta(\theta)| \leq (C_y + C_\cO ||u_h(\theta)||_V) ||\partial_{\theta_j} e_r^u(\theta)||_V + C_\cO ||\partial_{\theta_j} u_r(\theta)||_V ||e_r^u(\theta)||_V,
\end{split}
\end{equation}
which concludes \eqref{eq:grad-e-eta} by the stability estimate \eqref{eq:stability-u} and summing over $j = 1, \dots, d$.
By definition of $\eta^\Delta_y(\theta)$ and the relation \eqref{eq:e-delta-r}, for any $j = 1, \dots, d$, by the continuous differentiability of $A(\cdot, \cdot; \theta)$ w.r.t.\ $\theta$, and Lemma \ref{lemma:nabla-u-phi}, we have 
\begin{equation}
\begin{split}
&\partial_{\theta_j} \eta_y^h(\theta) - \partial_{\theta_j} \eta^\Delta_y(\theta) 
 \\
 & = - A(\partial_{\theta_j} e_r^u(\theta), e_r^\psi(\theta); \theta)  - A( e_r^u(\theta), \partial_{\theta_j} e_r^\psi(\theta); \theta) \\
& - \partial_{\theta_j} A( e_r^u(\theta),  e_r^\psi(\theta); \theta) -  \nabla_u^2 \eta_y|_{u_h}(\partial_{\theta_j} e_r^u(\theta), e_r^u(\theta)),
\end{split}
\end{equation}
which, under Assumption \ref{ass:AF}, can be bounded by 
\begin{equation}
\begin{split}
& |\partial_{\theta_j} \eta_y^h(\theta) - \partial_{\theta_j} \eta^\Delta_y(\theta) | \\
& \leq \gamma(\theta) ||\partial_{\theta_j} e_r^u(\theta)||_V ||e_r^\psi(\theta)||_V + \gamma(\theta) ||e_r^u(\theta)||_V ||\partial_{\theta_j} e_r^\psi(\theta)||_V \\
& + \rho_j(\theta) ||e_r^u(\theta)||_V ||e_r^\psi(\theta)||_V + C_\cO ||e_r^u(\theta)||_V ||\partial_{\theta_j} e_r^u(\theta)||_V,
\end{split}
\end{equation}
which concludes \eqref{eq:grad-e-Delta} by summing over $j = 1, \dots, d$.
\end{proof}

\subsection{Error estimates for the posterior $\mu_y$}
Let $\mu_y^h$, $\mu_y^r$, and $\mu_y^\Delta$ denote the posterior distributions with densities $p_y^h(\theta)$, $p_y^r(\theta)$, and $p_y^\Delta(\theta)$ at $\theta \in \Theta$ by the high-fidelity and reduced basis approximations of the potential $\eta_y^h(\theta)$, $\eta_y^r(\theta)$, $\eta_y^\Delta(\theta)$, respectively. More explicitly, we write 
\begin{equation}
p_y^h(\theta) = \frac{1}{Z_h} \exp(-\eta_y^h(\theta)) p_0(\theta), 
\end{equation}
where the normalization constant $Z_h$ is given by
\begin{equation}
Z_h = \int_\Theta \exp(-\eta_y^h(\theta)) p_0(\theta) d\theta.
\end{equation}
The densities $p_y^r$ and $p_y^\Delta$, and the normalization constants $Z_r$ and $Z_\Delta$ are defined similarly corresponding to the potential $\eta_y^r$ and $\eta_y^\Delta$, respectively.

\begin{theorem}\label{thm:DKL-distance}
Under Assumption \ref{ass:AF}, we have 
\begin{equation}\label{eq:KL-r}
D_{\text{KL}} (\mu_y^h | \mu_y^r) \leq \bE_{\mu_y^h}\left[
|e_r^\eta|\right] + \bE_{\mu_y^h}[|\exp(e_r^\eta) - 1|],
\end{equation}
and 
\begin{equation}\label{eq:KL-Delta}
D_{\text{KL}} (\mu_y^h | \mu_y^\Delta) \leq \bE_{\mu_y^h}\left[
|e_r^\Delta|\right] + \bE_{\mu_y^h}[|\exp(e_r^\Delta) - 1|].
\end{equation}
\end{theorem}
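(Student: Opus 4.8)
The plan is to write the KL divergence explicitly from the definitions of the two posterior densities and then estimate the two resulting terms separately. Since both $p_y^h$ and $p_y^r$ carry the common prior factor $p_0(\theta)$, that factor cancels in the log-density ratio, leaving
\begin{equation}
\log\frac{p_y^h(\theta)}{p_y^r(\theta)} = \log\frac{Z_r}{Z_h} + \left(\eta_y^r(\theta) - \eta_y^h(\theta)\right) = \log\frac{Z_r}{Z_h} - e_r^\eta(\theta),
\end{equation}
so that $D_{\text{KL}}(\mu_y^h|\mu_y^r) = -\bE_{\mu_y^h}[e_r^\eta] + \log(Z_r/Z_h)$. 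The first term is immediately controlled pointwise by $-e_r^\eta(\theta) \leq |e_r^\eta(\theta)|$, which integrates against $\mu_y^h$ to give $-\bE_{\mu_y^h}[e_r^\eta] \leq \bE_{\mu_y^h}[|e_r^\eta|]$, the first term on the right-hand side of \eqref{eq:KL-r}.

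The key step is to rewrite the ratio of normalization constants as an expectation under $\mu_y^h$ rather than to bound $Z_r$ and $Z_h$ separately. Substituting $\eta_y^r = \eta_y^h - e_r^\eta$ in the integral defining $Z_r$, I would factor $\exp(-\eta_y^r) = \exp(-\eta_y^h)\exp(e_r^\eta)$ and divide by $Z_h$, obtaining
\begin{equation}
\frac{Z_r}{Z_h} = \frac{1}{Z_h}\int_\Theta \exp(-\eta_y^h(\theta))\exp(e_r^\eta(\theta)) p_0(\theta)\, d\theta = \bE_{\mu_y^h}\!\left[\exp(e_r^\eta)\right].
\end{equation}
I would then apply the elementary inequality $\log(1+t)\leq t$, valid for every $t>-1$, with $t = \bE_{\mu_y^h}[\exp(e_r^\eta)] - 1$; positivity of the integral defining $Z_r/Z_h$ guarantees $t>-1$, so the inequality is licit. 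This yields
\begin{equation}
\log\frac{Z_r}{Z_h} = \log\!\left(1 + \bE_{\mu_y^h}[\exp(e_r^\eta) - 1]\right) \leq \bE_{\mu_y^h}[\exp(e_r^\eta) - 1] \leq \bE_{\mu_y^h}[|\exp(e_r^\eta) - 1|].
\end{equation}
Combining the two estimates gives \eqref{eq:KL-r}, and \eqref{eq:KL-Delta} follows verbatim upon replacing $\eta_y^r$, $Z_r$, $e_r^\eta$ by $\eta_y^\Delta$, $Z_\Delta$, $e_r^\Delta$, since the argument uses only the shared prior factor and the definition of the error.

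The computation is essentially mechanical once these two simplifications are in place, so I do not expect a substantial obstacle; the only delicate point is the passage through the normalization constants. Recognizing that $Z_r/Z_h$ is exactly $\bE_{\mu_y^h}[\exp(e_r^\eta)]$ is what makes the log term collapse cleanly, and pairing this with the concavity inequality $\log(1+t)\leq t$ is precisely what produces the $\exp(e_r^\eta)-1$ form appearing in the statement. I would close by noting that Assumption \ref{ass:AF}, through the boundedness of $e_r^\eta$ and $e_r^\Delta$ established in Lemma \ref{lemma:bound-eta-h-r}, ensures all the expectations above are finite and the manipulations are justified.
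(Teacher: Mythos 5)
Your proposal is correct and follows essentially the same route as the paper: the identical decomposition $D_{\text{KL}}(\mu_y^h|\mu_y^r) = -\bE_{\mu_y^h}[e_r^\eta] + \log(Z_r/Z_h)$, the same absolute-value bound on the first term, and the same elementary inequality $\log(1+t)\leq t$ for the normalization ratio. Your one refinement --- using the exact identity $Z_r/Z_h = \bE_{\mu_y^h}[\exp(e_r^\eta)]$ rather than the paper's detour through $|Z_r - Z_h| \leq Z_h\,\bE_{\mu_y^h}[|\exp(e_r^\eta)-1|]$ --- is slightly cleaner but yields the same final bound by the same mechanism.
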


\begin{proof}
By definition of the KL divergence in \eqref{eq:KL-divergence}, we have 
\begin{equation}
\begin{split}
D_{\text{KL}} (\mu_y^h | \mu_y^r) & = \int_{\theta \in \Theta} p_y^h(\theta) \log \left(
\frac{p_y^h(\theta)}{p_y^r(\theta)}
\right) d\theta\\
& = \int_\Theta p_y^h(\theta) (\eta_y^r(\theta) - \eta_y^h(\theta)) d\theta + \log\left(\frac{Z_r}{Z_h}\right),  
\end{split}
\end{equation}
where the first term can be bounded by 
\begin{equation}
\begin{split}
\int_\Theta p_y^h(\theta) (\eta_y^r(\theta) - \eta_y^h(\theta)) d\theta &\leq \int_\Theta p_y^h(\theta) |e_r^\eta(\theta)| d\theta.
\end{split}
\end{equation}
To bound the second term $\log(Z_r/Z_h)$, we have 
\begin{equation}
\begin{split}
|Z_r - Z_h| & = \left|\int_\Theta (\exp(-\eta_y^r(\theta)) - \exp(-\eta_y^h(\theta))) p_0(\theta) d\theta \right|\\
& \leq \int_\Theta |\exp(-\eta_y^r(\theta)) - \exp(-\eta_y^h(\theta))| p_0(\theta) d\theta\\
& \leq \int_\Theta |\exp(e_r^\eta(\theta)) - 1| \exp(-\eta_y^h(\theta)) p_0(\theta) d\theta \\
& = Z_h \int_\Theta |\exp(e_r^\eta(\theta)) - 1| p_y^h(\theta) d\theta.
\end{split}
\end{equation}
Moreover, we have by $\log(1+\tau) \leq \tau $ for $\tau \geq 0$ that 
\begin{equation}
\begin{split}
\log\left(\frac{Z_r}{Z_h}\right)  \leq \log\left(1 + \frac{|Z_r - Z_h|}{Z_h}\right)  \leq \frac{|Z_r - Z_h|}{Z_h}.
\end{split}
\end{equation}
A combination of the above estimates concludes \eqref{eq:KL-r}. By the same argument, we obtain \eqref{eq:KL-Delta} where $\eta_y^r$ is replaced by $\eta_y^\Delta$. 
\end{proof}

\begin{remark}
Note that $|e^\tau - 1| < 2\tau$ for $\tau < 1$, so that if $|e_r^\eta(\theta)| < 1$ for all $\theta \in \Theta$, we have 
\begin{equation}
D_{\text{KL}} (\mu_y^h | \mu_y^r) \leq 3 \bE_{\mu_y^h}\left[
|e_r^\eta|\right].
\end{equation}
By the adaptive greedy construction in Algorithm \ref{alg:adaptive-greedy}, $|e_r^\eta(\theta)|$ is not necessarily small (in particular smaller than one) in the whole parameter domain $\Theta$. However, by construction it is small in the region where the posterior density is big. Let $\Omega_1 =:\{\theta \in \Theta: e_r^\eta(\theta) < 1\}$, then as long as $\bE_{\mu_y^h(\Omega \setminus \Omega_1)}[|\exp(e_r^\eta)-1|]$ is small, e.g., $\bE_{\mu_y^h(\Omega \setminus \Omega_1)}[|\exp(e_r^\eta)-1|] < K \bE_{\mu_y^h}\left[
|e_r^\eta|\right]$ for some constant $K > 0$, we have 
\begin{equation}
D_{\text{KL}} (\mu_y^h | \mu_y^r) \leq (3+K) \bE_{\mu_y^h}\left[
|e_r^\eta|\right].
\end{equation}
The same holds for $D_{\text{KL}} (\mu_y^h | \mu_y^\Delta) $. Note that $|e_r^\Delta|$ is typically smaller than $|e_r^\eta|$.
\end{remark}

By Lemma \ref{lemma:bound-eta-h-r} for the bound of the errors $e_r^\eta$  and $e_r^\Delta$, and Lemma  \ref{lemma:bound-u-psi-h-r} for the bound of the errors $e_r^u$ and $e_r^\psi$, Theorem \ref{thm:DKL-distance} implies the following results.

\begin{corollary}
Under Assumption \ref{ass:AF}, we have 
\begin{equation}
D_{\text{KL}} (\mu_y^h | \mu_y^r) \leq \bE_{\mu_y^h}\left[
C_{\alpha}^u ||R_u(u_r, \cdot; \cdot)||_{V'}\right] + \bE_{\mu_y^h}\left[
|\exp(C_{\alpha}^u ||R_u(u_r, \cdot; \cdot)||_{V'}) - 1| \right],
\end{equation}
where the constant $C_\alpha^u(\theta)$ is given by
\begin{equation}
C_\alpha^u(\theta) = \frac{(C_y+C_\cO C_u(\theta))}{\alpha(\theta)},
\end{equation}
where the residual $R_u$ is defined in \eqref{eq:state-residual},  the constants $C_y$ and $C_\cO$ are defined in \eqref{eq:Gamma-cO-y}, $C_u(\theta)$ is defined in \eqref{eq:stability-u}, and
\begin{equation}
\begin{split}
D_{\text{KL}} (\mu_y^h | \mu_y^\Delta) \leq \bE_{\mu_y^h}\left[
R_u^\psi(u_r, \psi_r; \cdot) \right] + \bE_{\mu_y^h}\left[
|\exp(R_u^\psi(u_r, \psi_r; \cdot))-1|\right],
\end{split}
\end{equation}
where 
\begin{equation}
R_u^\psi(u_r, \psi_r; \theta) := C_{\alpha}^\gamma ||R_u(u_r, \cdot; \theta)||_{V'} ||R_\psi(\cdot,\psi_r; \theta)||_{V'}+C_\alpha^{\gamma, \cO}||R_u(u_r, \cdot; \theta)||_{V'}^2,
\end{equation}
where $R_\psi$ is defined in \eqref{eq:adjoint-residual}, the constant $C_\alpha^\gamma(\theta)$ and $C_\alpha^{\gamma,\cO}(\theta)$ are given by 
\begin{equation}
C_\alpha^\gamma(\theta) = \frac{\gamma(\theta)}{\alpha^2(\theta)}, \quad \text{ and } C_\alpha^{\gamma,\cO}(\theta) = \frac{2\gamma(\theta) C_\cO +\alpha(\theta) C_\cO}{2\alpha^3(\theta)}.
\end{equation}
\end{corollary}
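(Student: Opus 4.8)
The plan is to read off the corollary as a direct consequence of Theorem~\ref{thm:DKL-distance}, the pointwise potential bounds of Lemma~\ref{lemma:bound-eta-h-r}, and the residual-based a-posteriori estimates for the state and adjoint errors in Lemma~\ref{lemma:bound-u-psi-h-r}. The scheme is simply a chain of pointwise inequalities inserted under the expectation $\bE_{\mu_y^h}[\cdot]$ appearing in \eqref{eq:KL-r} and \eqref{eq:KL-Delta}: first replace $|e_r^\eta|$ (resp.\ $|e_r^\Delta|$) by its bound in terms of $||e_r^u||_V$ and $||e_r^\psi||_V$, then replace those energy-norm errors by the dual-norm residuals $||R_u(u_r,\cdot;\theta)||_{V'}$ and $||R_\psi(\cdot,\psi_r;\theta)||_{V'}$, and finally collect the coercivity, continuity, and observation constants into $C_\alpha^u$, $C_\alpha^\gamma$, and $C_\alpha^{\gamma,\cO}$.

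For the potential estimate, Lemma~\ref{lemma:bound-eta-h-r} gives $|e_r^\eta(\theta)| \leq (C_y + C_\cO C_u(\theta))||e_r^u(\theta)||_V$, while the standard coercivity argument behind Lemma~\ref{lemma:bound-u-psi-h-r} gives $||e_r^u(\theta)||_V \leq \alpha(\theta)^{-1}||R_u(u_r,\cdot;\theta)||_{V'}$, so that $|e_r^\eta(\theta)| \leq C_\alpha^u(\theta)||R_u(u_r,\cdot;\theta)||_{V'}$ with $C_\alpha^u = (C_y + C_\cO C_u)/\alpha$ exactly as stated. The linear term of \eqref{eq:KL-r} follows by taking $\bE_{\mu_y^h}[\cdot]$. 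For the exponential term I would first record the elementary fact $|\exp(\tau) - 1| \leq \exp(|\tau|) - 1$ for all $\tau \in \bR$ (the positive branch dominates the negative one, since $\exp(a)+\exp(-a)\geq 2$), so that by monotonicity of $\exp$ and the pointwise bound on $|e_r^\eta|$ we obtain $|\exp(e_r^\eta(\theta)) - 1| \leq \exp(C_\alpha^u(\theta)||R_u(u_r,\cdot;\theta)||_{V'}) - 1$; the outer absolute value in the corollary is then redundant because its argument is nonnegative. Taking expectations closes the first estimate.

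The estimate for $\mu_y^\Delta$ is the one requiring care in tracking constants. Lemma~\ref{lemma:bound-eta-h-r} gives $|e_r^\Delta| \leq \gamma ||e_r^u||_V ||e_r^\psi||_V + \frac{1}{2} C_\cO ||e_r^u||_V^2$. The key point is that the adjoint a-posteriori bound in Lemma~\ref{lemma:bound-u-psi-h-r} is not purely $\alpha^{-1}||R_\psi||_{V'}$: since the reduced adjoint right-hand side is evaluated at $u_r$ rather than $u_h$, the adjoint error equation carries an extra term $-(\cO(e_r^u))^T\Gamma^{-1}\cO(\cdot)$, which yields $||e_r^\psi||_V \leq \alpha^{-1}||R_\psi||_{V'} + (C_\cO/\alpha)||e_r^u||_V$. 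Substituting $||e_r^u||_V \leq \alpha^{-1}||R_u||_{V'}$ into this and into the two terms of the $e_r^\Delta$ bound, the cross term $\gamma ||e_r^u||_V ||e_r^\psi||_V$ splits into $(\gamma/\alpha^2)||R_u||_{V'}||R_\psi||_{V'}$ plus $(\gamma C_\cO/\alpha^3)||R_u||_{V'}^2$, while $\frac{1}{2} C_\cO ||e_r^u||_V^2 \leq (C_\cO/2\alpha^2)||R_u||_{V'}^2$; adding the two $||R_u||_{V'}^2$ contributions gives precisely $C_\alpha^{\gamma,\cO} = (2\gamma C_\cO + \alpha C_\cO)/(2\alpha^3)$, and the product term gives $C_\alpha^\gamma = \gamma/\alpha^2$. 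Hence $|e_r^\Delta(\theta)| \leq R_u^\psi(u_r,\psi_r;\theta)$, and the same $|\exp(\tau)-1| \leq \exp(|\tau|)-1$ monotonicity argument handles the exponential term; taking $\bE_{\mu_y^h}[\cdot]$ of both pieces yields \eqref{eq:KL-Delta}.

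The main obstacle, such as it is, lies entirely in the bookkeeping of the $\Delta$ case: one must retain the extra $(C_\cO/\alpha)||e_r^u||_V$ contribution to the adjoint error estimate, otherwise one recovers only the $\alpha C_\cO/(2\alpha^3)$ piece of $C_\alpha^{\gamma,\cO}$ and misses the $2\gamma C_\cO/(2\alpha^3)$ piece coming from the coupling of the adjoint residual to the state error. Everything else is a routine substitution, as the analytic content has already been established in Theorem~\ref{thm:DKL-distance} together with Lemmas~\ref{lemma:bound-eta-h-r} and~\ref{lemma:bound-u-psi-h-r}.
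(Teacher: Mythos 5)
Your proposal is correct and follows exactly the route the paper intends: the paper gives no separate proof of this corollary, stating only that it follows by combining Theorem~\ref{thm:DKL-distance} with Lemmas~\ref{lemma:bound-eta-h-r} and~\ref{lemma:bound-u-psi-h-r}, which is precisely the chain of substitutions you carry out, with the constant bookkeeping (including the $(C_\cO/\alpha)\|e_r^u\|_V$ contribution to the adjoint error that produces the $2\gamma C_\cO/(2\alpha^3)$ piece of $C_\alpha^{\gamma,\cO}$) done correctly. Your explicit justification of the exponential step via $|\exp(\tau)-1|\leq \exp(|\tau|)-1$ and monotonicity is a detail the paper leaves implicit, and it is sound.
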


\section{Numerical experiments}
\label{sec:numerics}


In this section, we perform numerical experiments based on a linear diffusion problem with random coefficient to demonstrate the computational accuracy and efficiency of the proposed method. More specifically, we consider the parametric diffusion problem
\begin{equation}\label{eq:diffusion}
-\nabla \cdot( a(\theta, x) \nabla u(\theta, x) ) = f(x), \quad x \in D = (0, 1)^2,
\end{equation}
with homogeneous Dirichlet boundary condition on the bottom and top boundaries and homogeneous Neumann boundary condition on the left and right boundaries. The parametric coefficient $a(\theta)$ for each $\theta \in \Theta \in \bR^d$ is given by 
\begin{equation}
a(\theta, x) = a_0(x) + \sum_{j=1}^d c_j(\theta) a_j(x), \quad x \in D,
\end{equation}
which leads to the PDE model \eqref{eq:state} with 
\begin{equation}
A(u,v;\theta) = \int_D a(\theta) \nabla u \cdot \nabla v dx, \text{ and } F(v;\theta) = \int_D f v dx,
\end{equation}
where the affine decomposition \eqref{eq:affine} can be explicitly written with 
\begin{equation}
A_j(u,v) = \int_D a_j \nabla u \cdot \nabla v dx, \text{ and } c_j^A(\theta) = c_j(\theta), \quad j = 1, \dots, d.
\end{equation}

We consider pointwise observation operator $\cO = (o_1, \dots, o_{s})$ with $o_i(u(\theta)) = u(\theta, x_i)$, where $x_i$, $i = 1, \dots, 49$, are uniformly located in the domain $D = (0, 1)^2$. For the observation noise $\xi \sim \cN(0, \Gamma)$, we set the covariance $\Gamma = \text{diag}(\sigma^2, \dots, \sigma^2)$ with $\sigma = 0.01 \times \max_{i=1, \dots, s}(o_i(u(\theta_{\text{ref}})))$ where $\theta_{\text{ref}} = (1, \dots, 1)$ is set as the reference value of the parameter.
For the high-fidelity approximation, we use a finite element method with linear elements in a mesh of uniform triangle of size $129 \times 129$. 

We study two parametrization cases with different prior distributions and different number of parameters, one with $4$ uniformly distributed random parameters and the other with $9$ Gaussian random parameters. We present results for the former in this section, while similar results for the latter are presented in Appendix \ref{sec:Gaussain}.

In this example, we consider the parameter $\theta = (\theta_1, \theta_2, \theta_3, \theta_4)$ with i.i.d.\ uniformly distributed random components $\theta_i \sim U([-\sqrt{3}, \sqrt{3}])$, with zero mean and unit variance, the coefficients $c_j(\theta) = \theta_j$, $j = 1, 2,3, 4$, and the basis functions 
$$a_0(x) = 5, \quad a_j(x) = \cos( j_1 \pi x_1) \cos( j_2 \pi x_2),
$$
with $(j_1, j_2) = (1,1), (1,2), (2,1), (2,2)$ for $ j = 1, 2, 3, 4$. We set $f = 1$ in \eqref{eq:diffusion}.

\begin{figure}[!htb]
	\centering
	\includegraphics[trim=0 0 40 40,clip=True,width=0.48\textwidth]{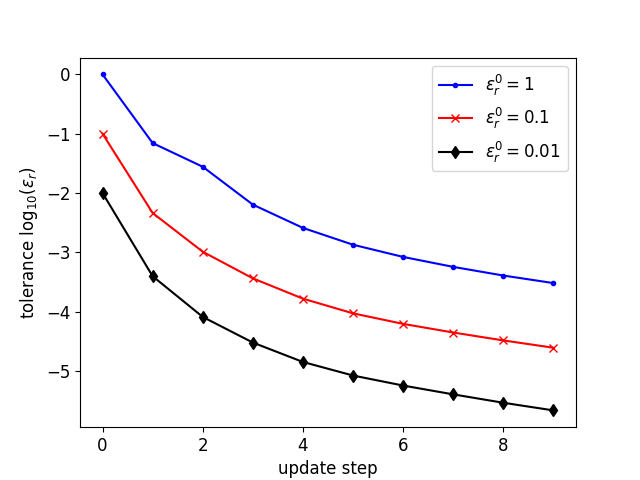}
	\includegraphics[trim=0 0 40 40,clip=True,width=0.48\textwidth]{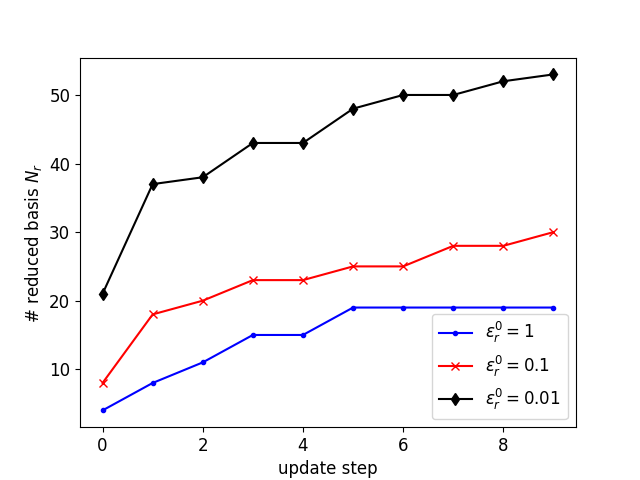}
	\caption{Change of tolerances (left) and the number of reduced basis functions (right) w.r.t.\ the RB update step $i$ with SVGD step $l = i K$ and $K = 10$ in Algorithm \ref{alg:adaptive-greedy}.}
	\label{fig:rom-update}
\end{figure}

We run the greedy Algorithm \ref{alg:adaptive-greedy} for the construction of reduced basis approximations and their applications in the SVGD process. We follow \cite{LiuWang16} to use the kernel  \eqref{eq:kernel} with the scaling factor $h = \text{med}^2/\log(N)$, where $\text{med}$ is the median of the pairwise distance between the current particles $\theta_n$, $n = 1, \dots, N$. We set the initial tolerance as $\varepsilon_r^0 = 1, 0.1, 0.01$, respectively, and update the reduced basis approximations every $K = 10$ SVGD steps with new tolerance given by $\varepsilon_r^l = \varepsilon_r^0 t_l$ with $t_l$ defined in \eqref{eq:t-l}. The change of the tolerances and the number of reduced basis functions for different initial tolerances are shown in Fig.\ \ref{fig:rom-update}, from which we can see that as the gradient norm of SVGD update decreases, i.e., as the particles more closely follow the posterior distribution, the reduced basis approximations become more accurate with a larger number of reduced basis functions. 

\begin{figure}[!htb]
	\centering
	\begin{subfigure}{0.59\textwidth}
		\centering
		\centering
		\includegraphics[trim=80 10 80 30,clip=True,width=0.32\textwidth]{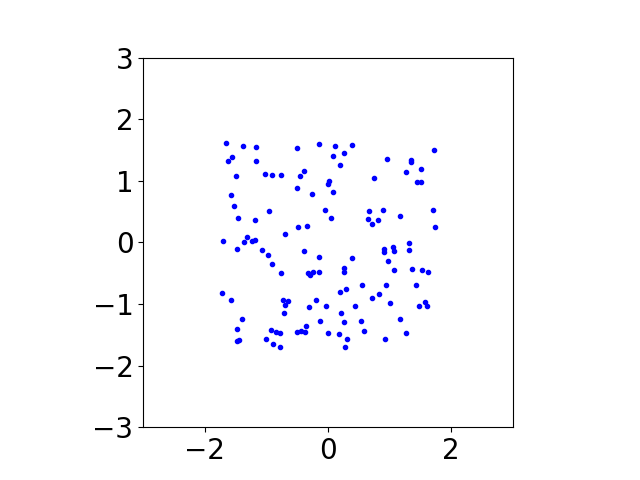}
		\includegraphics[trim=80 10 80 30,clip=True,width=0.32\textwidth]{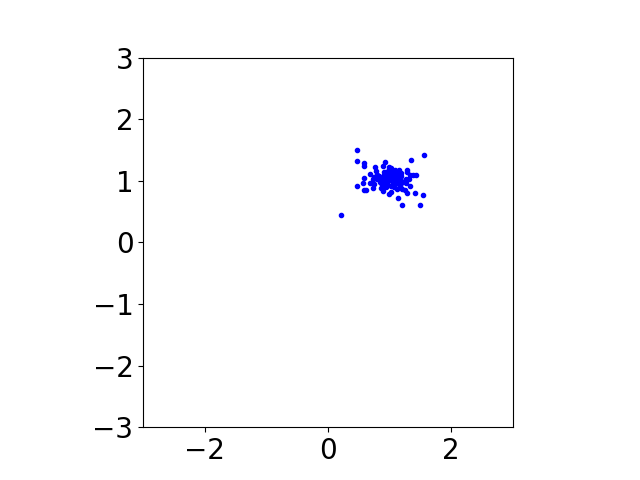}
		\includegraphics[trim=80 10 80 30,clip=True,width=0.32\textwidth]{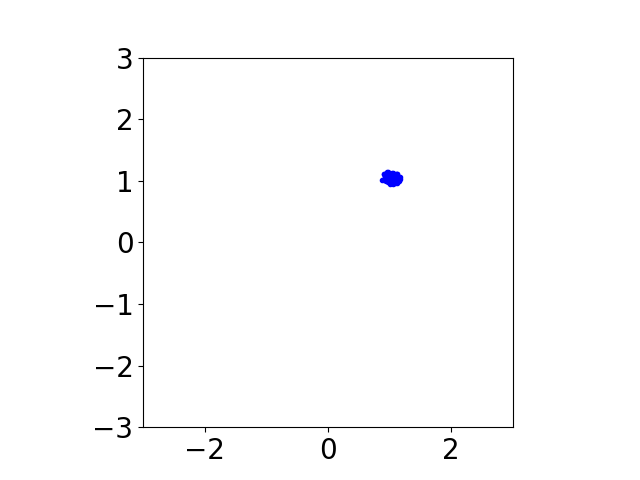}
		
		\includegraphics[trim=80 10 80 30,clip=True,width=0.32\textwidth]{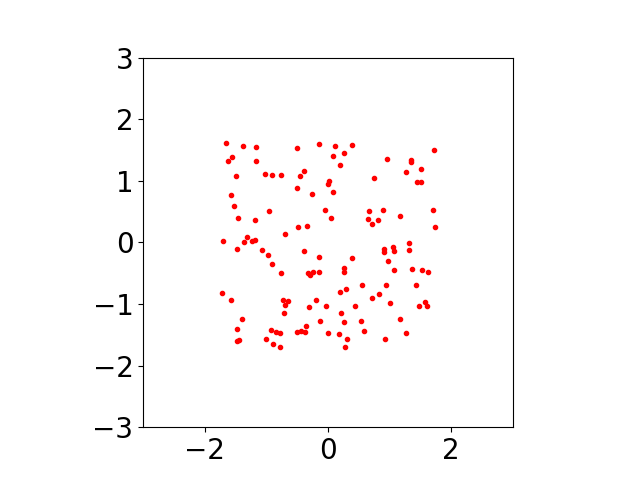}
		\includegraphics[trim=80 10 80 30,clip=True,width=0.32\textwidth]{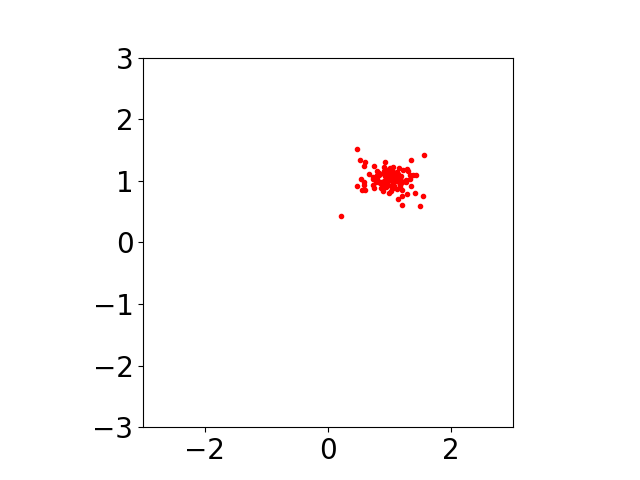}
		\includegraphics[trim=80 10 80 30,clip=True,width=0.32\textwidth]{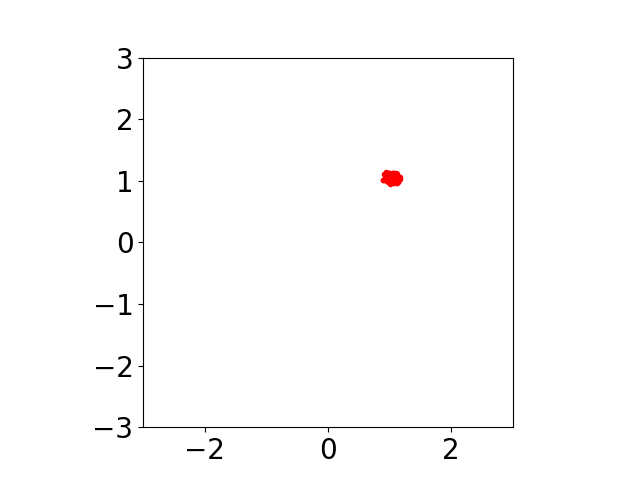}
		\caption{Locations of 128 particles ($\theta_1, \theta_2$) at SVGD step $l = 0$ (left), $9$ (middle), $99$ (right) by high-fidelity (top) and reduced basis (bottom) approximations.}
	\end{subfigure}%
	~ 
	\begin{subfigure}{0.39\textwidth}
		\centering
		\includegraphics[trim=70 10 80 30,clip=True,width=0.96\textwidth]{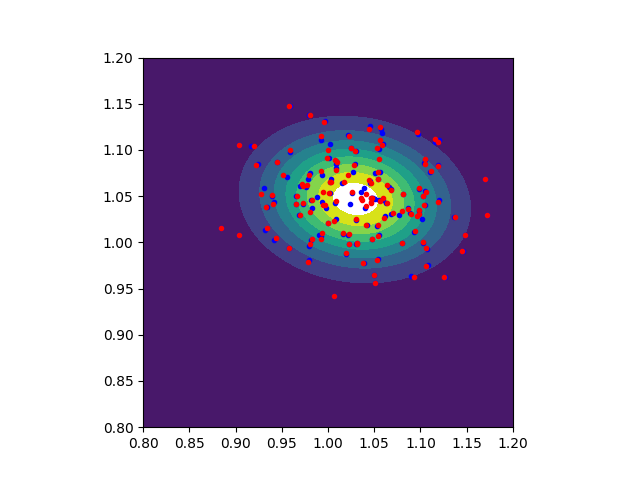}
		\caption{Contour of the marginal posterior density for $(\theta_1, \theta_2)$ and locations of particles at SVGD step $l = 99$.}
	\end{subfigure}
\vspace{-0.5cm}
	\caption{Comparision of particles by high-fidelity and reduced basis approximations.}
	\label{fig:particles-uniform}
\end{figure}

Fig.\ \ref{fig:particles-uniform} depicts the update of 128 particles (projected in dimension ($\theta_1, \theta_2$)) by SVGD with high-fidelity and reduced basis approximations (with initial tolerance $\varepsilon_r^0 = 0.01$) of the PDE models, respectively. At the initial step $l = 0$, we randomly draw 128 samples from the uniform prior distribution, as shown in the left two figures of part (a), and use them for both the high-fidelity and reduced basis approximations. At SVGD step $l = 9$ and $l = 99$, the updated particles with different approximations are displayed in the middle and right two figures of part (a), which appear very close to each other. More details are shown in part (b) in the enlarged region where the marginal posterior density of high-fidelity approximation in dimension ($\theta_1, \theta_2$) is evidently different from zero, from which we can see that the particles obtained by the reduced basis approximations are very close to those obtained by the high-fidelity approximations. Both have effectively good empirical representation of the posterior distribution. 

\begin{figure}[!htb]
    \centering
    \includegraphics[trim=0 0 40 40,clip=True,width=0.48\textwidth]{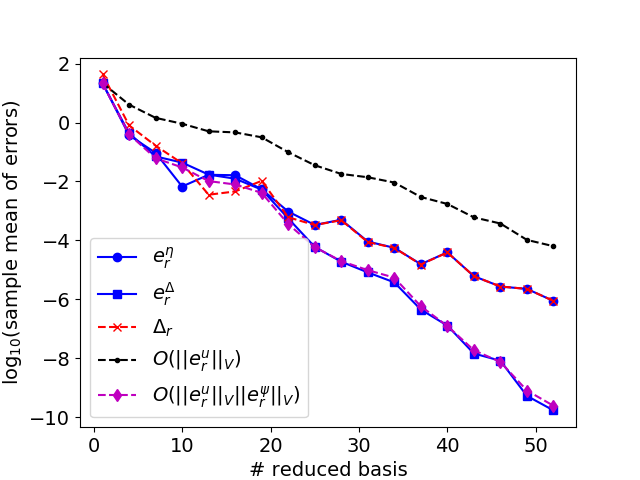}
    \includegraphics[trim=0 0 40 40,clip=True,width=0.48\textwidth]{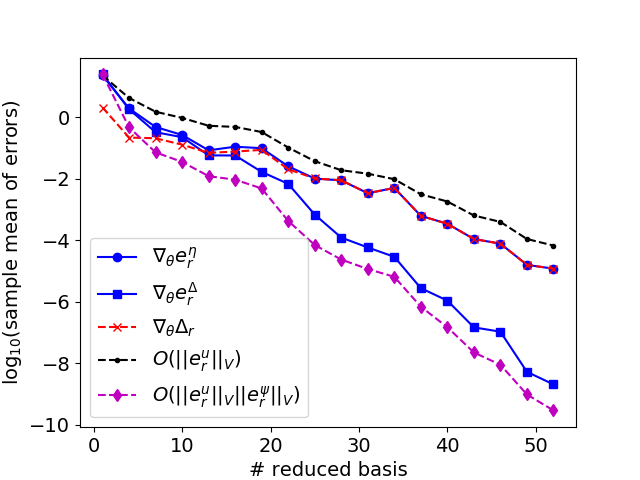}

    \includegraphics[trim=0 0 40 40,clip=True,width=0.48\textwidth]{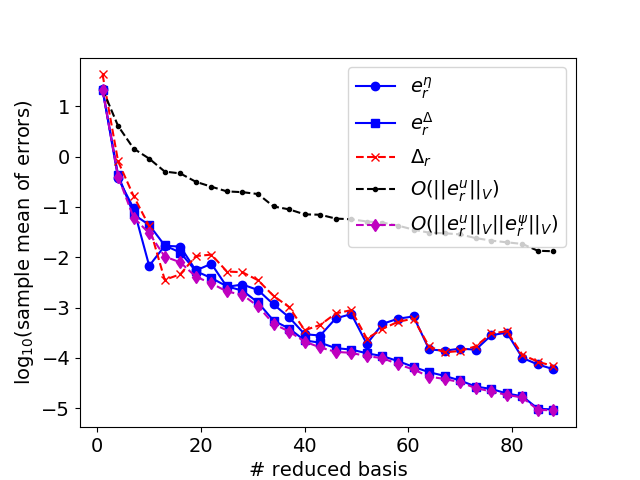}
    \includegraphics[trim=0 0 40 40,clip=True,width=0.48\textwidth]{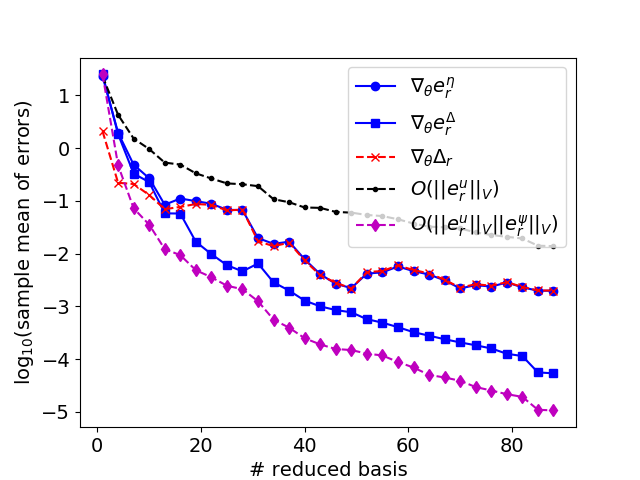}
    \caption{Sample mean of the approximation errors and estimates for adaptive RB (top) and fixed RB (bottom) approximations of $\eta_y^h$ (left) and $\nabla_\theta \eta_y^h$ (right) at step $l = 99$.}
    \label{fig:accuracy-uniform}
\end{figure}

Fig.\ \ref{fig:accuracy-uniform} demonstrates the accuracy of the reduced basis approximations of the potential $\eta_y$ and its gradient $\nabla_\theta \eta_y$, the efficacy of the error estimate $\Delta_r$ used in the greedy algorithm and various error bounds, as well as the advantage of the adaptive greedy construction. More specifically, from the top two figures on the decay of the sample averaged reduced basis approximation errors $e_r^\eta$ and $e_r^\Delta$ at SVGD step $l = 99$, obtained by the adaptive greedy Algorithm \ref{alg:adaptive-greedy}, we can see that the averaged error $e_r^\eta$ decays asymptotically as the averaged error bound $||e_r^u||_V$ (which is rescaled by a constant such that the error and the bound are equal at $N_r = 1$), as predicted by Lemma \ref{lemma:bound-eta-h-r}. Moreover, the averaged error $e_r^\Delta$ decays asymptotically as the averaged error bound $||e_r^u||_V ||e_r^\psi||_V$, as predicted by Lemma \ref{lemma:bound-eta-h-r} where we note that $||e_r^u||_V ||e_r^\psi||_V$ dominates $||e_r^u||_V^2$. By comparison of $e_r^\eta$ and $e_r^\Delta$, we can also see that, $\eta_y^\Delta$, the reduced basis approximation of the potential, $\eta_y^r$, corrected by the dual weighted residual $\Delta_r$, is much more accurate than $\eta_y^r$ itself, especially when the number of reduced basis functions becomes large. This observation can be confirmed by the closeness of the residual $\Delta_r$ and the error $e_r^\eta$ as shown in the top-left part of Fig. \ref{fig:accuracy-uniform}. Similar conclusions consistent with Lemma \ref{lemma:grad-e-r-eta} can be drawn for the reduced basis approximation of the gradient of the potential $\nabla_\theta \eta_y^h$, as depicted in the top right part of the figure. Note that we did not compute the norm of the gradients of the state and adjoint, i.e., $||\nabla_\theta e_r^u||_{V^d}$ and $||\nabla_\theta e_r^\psi||_{V^d}$, as they involve solving additional $2d$ PDE problems presented in Section \ref{sec:upsigrad}, which are not needed in the adaptive greedy algorithm, Algorithm \ref{alg:adaptive-greedy}. The bottom two figures of Fig.\ \ref{fig:accuracy-uniform} show the decay of the errors and bounds for the reduced basis approximation with fixed reduced basis functions constructed at the initial step of SVGD with tolerance $\varepsilon_r = 10^{-5}$, in contrast with the adaptive construction. We can see that the reduced basis approximations constructed once and used for all later SVGD evaluations become less accurate than the reduced basis approximations by the adaptive construction, both for the approximation of $\eta_y$ and $\nabla_\theta \eta_y$, even when the number of reduced basis functions of the former is much larger than the latter. This demonstrates the advantage of the adaptive greedy construction in terms of accuracy of the reduced basis approximations.

\begin{table}[h!]
\centering
\begin{tabular}{| c | c | c | c c c | c|} 
 \hline
\multicolumn{2}{|c|}{} & HiFi & \multicolumn{3}{|c|}{adaptive RB} & fixed RB\\  
 \hline
\multicolumn{2}{|c|}{initial tolerance $\varepsilon_r^0$} & n/a & $1$ &  $0.1$ &  $0.01$ & $0.00001$ \\
\hline
\multirow{4}{*}{$M=64$}&DOF ($N_h, N_r$) & 16641 
& 20 & 31 & 49 & 62\\
& time to build RB & n/a & $4.4$ & $7.1$ & $12.2$ & $15.8$\\
& time for evaluation & $1.8\times 10^3$ & $4.4$ & $4.8$ & $5.8$ & $7.3$ \\
& speedup factor & 1 & 203 & 148 & 98 & 62\\
 \hline
\multirow{4}{*}{M=128}&DOF ($N_h, N_r$) & 16641 
& 19 & 30 & 53 & 87\\
& time to build RB & n/a & $4.5$ & $7.3$ & $14.3$ & $26.3$\\
& time for evaluation & $3.5\times 10^3$ & $8.3$ & $9.5$ & $11.8$ & $19.2$\\
& speedup factor & 1 & 267 & 212 & 137 & 78\\
 \hline
\end{tabular}
\caption{Comparison of computational cost of high-fidelity (HiFi) and reduced basis (RB) approximations for SVGD up to $l = 99$, with different number of particles, different RB construction schemes and tolerances, in terms of degrees of freedom (DOF), CPU time for evaluation and RB construction, and speedup factor, which is the ratio of HiFi evaluation time/(RB construction + evaluation time).}
\label{table:cost}
\end{table}

We report the computational cost of high-fidelity and reduced basis approximations in the SVGD process up to step $l = 99$ in Table \ref{table:cost}. For the high-fidelity approximation, the number of degrees of freedom is 16,641 using P1 finite elements on a $129\times 129$ mesh of triangles, which leads to an averaged (128 samples at SVGD step $l = 99$) approximation error for the potential $\eta_y$ of about $10^{-4}$ (using mesh size $257\times 257$ as reference). From the results we see that with increasing initial tolerance $\varepsilon_r^0 = 1, 0.1, 0.01$, the adaptive RB becomes more expensive for construction and evaluation, which achieves mostly over 100X speedup compared to the high-fidelity solution (HiFi) in terms of CPU time. Moreover, with larger number of the particles, the adaptive RB construction leads to similar number of reduced basis functions for the same initial tolerance, and achieves
higher speedup since the RB construction time does not change much. Furthermore, compared to the fixed RB construction with relatively small tolerance at the initial step, adaptive RB leads to higher speedup, while achieving higher accuracy than the former as shown in Fig.\ \ref{fig:accuracy-uniform}. We note that the reduced basis (averaged) approximation (of $\eta_y^h$) errors are smaller than the HiFi (averaged) approximation errors of about $10^{-4}$ at SVGD step $l = 99$, even with the initial tolerance $\varepsilon_r^0 = 0.1$ as can be observed from Fig.\ \ref{fig:rom-update} and in particular for $\varepsilon_r^0 = 0.01$ as seen from Fig.\ \ref{fig:accuracy-uniform}. Therefore, the adaptive RB may achieve much higher speedup if the high-fidelity approximation is refined to achieve the same errors as those of the reduced basis approximations.

\section{Conclusion}
\label{sec:conclusion}

In this work, we developed and analyzed a computational approach for efficient sampling from the posterior distribution in Bayesian inversion governed by PDEs. This approach builds on an adaptive integration of optimization based SVGD for sampling and goal-oriented RB approximations for computational efficiency, leading to the combined advantages of (i) optimization for sampling using local geometric information (gradient) of the posterior, (ii) goal-oriented, certified, and improved RB approximations of the potential and its gradient at all SVGD samples due to the dual-weighted residual and its gradient used in an adaptive greedy construction, and (iii) a natural balance between computational accuracy and efficiency by adaptive tuning of the tolerance for the RB construction according to the convergence of the SVGD sampling. 
We carried out detailed analysis of the RB approximation errors of the potential and its gradient, which have been demonstrated in the numerical examples, the induced errors of the posterior distribution measured by Kullback--Leibler divergence, as well as the errors induced in the samples. In particular, we proved and demonstrated that the improved RB approximations for both the potential and its gradient by the dual-weighted residual and its gradient achieve higher accuracy than the RB approximations. Moreover, we showed that the proposed SVRB method achieved over 100X speedup compared to SVGD for PDE-constrained Bayesian inversion with both uniform and Gaussian random parameters, with the speedup becoming more pronounced as the number of samples increases.

Several avenues for the further development of SVRB include: (i) The development and analysis of SVRB is built on linear PDEs with affine dependence of the parameter. Extension of this method and its analysis for nonlinear and nonaffine PDE models is under  investigation. (ii) Beyond using gradient information of the posterior in SVGD, we can exploit its Hessian and use the Stein variational Newton (SVN) method \cite{DetommasoCuiMarzoukEtAl18} to improve the convergence of sampling and SVRB. (iii) In addition to the RB dimension reduction in the physical (high-fidelity approximation) space, we can integrate a simultaneous dimension reduction in both the physical space and the parameter space to tackle the curse-of-dimensionality faced by Stein variational methods for Bayesian inference of high-dimensional parameters, by leveraging the projected SVGD/SVN \cite{ChenWuChenEtAl19, ChenGhattas20}. (iv) A more comprehensive analysis including the convergence w.r.t.\ the number of samples and variational iterations, combined with the RB errors, is theoretically interesting. (v) It is important to further develop a hybrid parallel implementation in both sampling and PDE solving for large-scale Bayesian inversion. 
 
%

\bibliographystyle{siamplain}
\bibliography{references}

\appendix

\section{Error estimates}

\subsection{Proof of Lemma \ref{lemma:nabla-u-phi}}
\label{sec:proof-lemma-1}
\begin{proof}
	For any $\theta \in \Theta$, and any $j = 1, \dots, d$, let $\varepsilon > 0$ be such that $\theta + \varepsilon e_j \in \Theta$ for the vector $e_j \in \bR^d$ with $i$-th element $(e_j)_i = \delta_{ij}$, being $\delta_{ij}$ the Kronecker delta. By subtracting \eqref{eq:state} at $\theta$ from it at $\theta + \varepsilon e_j$, we have
	\begin{equation}
	A(u(\theta + \varepsilon e_j), v; \theta + \varepsilon e_j) - A(u(\theta), v; \theta) = F(v; \theta + \varepsilon e_j) - F(v; \theta), \quad \forall v \in V,
	\end{equation}
	which, by dividing by $\varepsilon$ and letting $\varepsilon \to 0$, together with the continuous differentiability of $A(\cdot, \cdot; \theta)$ and $F(\cdot; \theta)$ w.r.t.\ $\theta$ under A2 of Assumption \ref{ass:AF}, leads to 
	\begin{equation}\label{eq:state-partial-theta}
	A(\partial_{\theta_j} u(\theta), v; \theta ) + \partial_{\theta_j} A(u(\theta), v; \theta) = \partial_{\theta_j} F(v;\theta), \quad \forall v \in V.
	\end{equation}
	where we denote 
	\begin{equation}
	\partial_{\theta_j}u(\theta) := \lim_{\varepsilon \to 0} \frac{u(\theta + \varepsilon e_j) - u(\theta)}{\varepsilon},
	\end{equation}
	which exists and is the unique solution of \eqref{eq:state-partial-theta} by Lax--Milgram theorem under of Assumption \ref{ass:AF}. Moreover, we have the stability estimate 
	\begin{equation}
	||\partial_{\theta_j} u(\theta) ||_V \leq \frac{1}{\alpha(\theta)} (\rho_j(\theta) ||u(\theta)||_V + ||\partial_{\theta_j} F(\cdot;\theta)||_{V'}),
	\end{equation}
	which leads to \eqref{eq:partial-theta-u} by the stability estimate \eqref{eq:stability-u}. 
	
	Similarly, by subtracting \eqref{eq:adjoint} at $\theta$ from it at $\theta + \varepsilon e_j$, we have 
	\begin{equation}
	A(w, \psi(\theta + \varepsilon e_j);\theta+\varepsilon e_j) - A(w, \psi(\theta);\theta) = - \nabla_u \eta_y|_{u(\theta + \varepsilon e_j)}(w) + \nabla_u \eta_y|_{u(\theta)}(w) 
	\end{equation}
	which, by dividing by $\varepsilon$ and letting $\varepsilon \to 0$, together with the continuous differentiability of $A(\cdot, \cdot; \theta)$ under A2 of Assumption \ref{ass:AF}, leads to 
	\begin{equation}\label{eq:adjoint-partial-theta}
	A(w, \partial_{\theta_j} \psi(\theta); \theta) + \partial_{\theta_j} A(w, \psi(\theta);\theta) = (y - \cO(\partial_{\theta_j} u(\theta)))^T \Gamma^{-1} \cO(w),
	\end{equation}
	where we denote 
	\begin{equation}
	\partial_{\theta_j} \psi(\theta): = \lim_{\varepsilon \to 0} \frac{\psi(\theta + \varepsilon e_j) - \psi(\theta)}{\varepsilon},
	\end{equation}
	which is the unique solution of \eqref{eq:adjoint-partial-theta} by Lax--Milgram theorem under Assumption \ref{ass:AF}, and satisfies the stability estimate
	\begin{equation}
	||\partial_{\theta_j} \psi(\theta)||_V \leq \frac{1}{\alpha(\theta)} (\rho_j(\theta) ||\psi(\theta)||_V + C_y + C_\cO ||\partial_{\theta_j} u(\theta)||_V),
	\end{equation}
	where $C_y$ and $C_\cO$ are defined in \eqref{eq:Gamma-cO-y}. This leads to \eqref{eq:partial-theta-psi} by the stability estimate \eqref{eq:stability-psi}. By following the same argument, we can show that the estimate \eqref{eq:partial-theta-u} holds for $\nabla_\theta u_h(\theta)$ and $\nabla_\theta u_r(\theta)$, while \eqref{eq:partial-theta-psi} holds for $\nabla_\theta \psi_h(\theta) $ and $\nabla_\theta \psi_r(\theta) $.
\end{proof}

\subsection{Error estimates for the state $u$ and adjoint $\psi$ and their gradients $\nabla_\theta u(\theta)$ and $\nabla_\theta \psi(\theta)$}
\label{sec:upsigrad}
Let $R_u(u_r, \cdot; \theta)$ and $R_\psi(\psi_r, \cdot; \theta)$ denote the residuals of the state and adjoint equations 
\begin{equation}\label{eq:state-residual}
R_u(u_r, v_h; \theta) = A(u_r, v_h; \theta) - F(v_h;\theta) \quad \forall v_h \in V_h,
\end{equation}
and 
\begin{equation}\label{eq:adjoint-residual}
R_\psi(w_h, \psi_r; \theta) = A(w_h, \psi_r; \theta) + \nabla_u \eta_y|_{u_r}(w_h) \quad \forall w_h \in V_h.
\end{equation}
The following error estimates can be obtained for the reducal basis errors $e_r^u$ and $e_r^\psi$. 

\begin{lemma}\label{lemma:bound-u-psi-h-r}
	Under Assumption \ref{ass:AF}, for any $\theta \in \Theta$, there holds 
	\begin{equation}
	||e_r^u(\theta)||_V \leq \frac{1}{\alpha(\theta)} ||R_u(u_r,\cdot;\theta)||_{V'},
	\end{equation}
	and 
	\begin{equation}
	||e_r^\psi(\theta)||_V \leq \frac{1}{\alpha(\theta)} ||R_\psi(\cdot, \psi_r;\theta)||_{V'} + \frac{C_\cO}{\alpha(\theta)} ||e_r^u(\theta)||_V.
	\end{equation}
\end{lemma}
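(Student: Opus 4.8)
The plan is to prove both estimates by the standard a-posteriori residual argument: for each Galerkin error I derive an error equation that is valid for all test functions in $V_h$, test it against the error itself (which lies in $V_h$), and then combine coercivity A1 with the dual norm of the residual. The one genuine subtlety is that the adjoint right-hand side depends on the state solution, so the adjoint error inherits an extra contribution from the state error; tracking this cleanly is the crux of the second bound.

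First, for the state estimate, I would observe that since $u_h \in V_h$ solves \eqref{eq:state-HiFi} and $u_r \in V_r \subset V_h$ solves \eqref{eq:state-RB}, for every $v_h \in V_h$ we have
\begin{equation}
A(e_r^u(\theta), v_h; \theta) = A(u_h, v_h; \theta) - A(u_r, v_h; \theta) = F(v_h; \theta) - A(u_r, v_h; \theta) = -R_u(u_r, v_h; \theta),
\end{equation}
by the definition \eqref{eq:state-residual} of the residual. Since $e_r^u(\theta) \in V_h$, I would take $v_h = e_r^u(\theta)$ and use coercivity to get $\alpha(\theta) ||e_r^u(\theta)||_V^2 \leq A(e_r^u, e_r^u; \theta) = -R_u(u_r, e_r^u; \theta) \leq ||R_u(u_r, \cdot; \theta)||_{V'} ||e_r^u(\theta)||_V$; dividing by $||e_r^u(\theta)||_V$ (the bound being trivial when it vanishes) yields the first claim.

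For the adjoint, the key step is to form the error equation for $e_r^\psi$ while keeping the mismatch between the two data terms $\nabla_u \eta_y|_{u_h}$ and $\nabla_u \eta_y|_{u_r}$. Subtracting, for $w_h \in V_h$, and using \eqref{eq:adjoint-HiFi} together with the definition \eqref{eq:adjoint-residual} of $R_\psi$, I would obtain
\begin{equation}
A(w_h, e_r^\psi(\theta); \theta) = -R_\psi(w_h, \psi_r; \theta) + \left(\nabla_u \eta_y|_{u_r}(w_h) - \nabla_u \eta_y|_{u_h}(w_h)\right).
\end{equation}
Because $\eta_y$ is quadratic, the bracketed term is linear in the state error: by \eqref{eq:nabla_u-eta} and the linearity of $\cO$ it equals exactly $-\cO(e_r^u(\theta))^T \Gamma^{-1} \cO(w_h)$. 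Testing with $w_h = e_r^\psi(\theta) \in V_h$, invoking coercivity, bounding the residual term by $||R_\psi(\cdot, \psi_r; \theta)||_{V'} ||e_r^\psi||_V$, and the coupling term by $|\cO(e_r^u)^T \Gamma^{-1} \cO(e_r^\psi)| \leq C_\cO ||e_r^u||_V ||e_r^\psi||_V$ (using $||\cO(w)||_2 \leq ||\cO||_{V'} ||w||_V$ and the definition of $C_\cO$ in \eqref{eq:Gamma-cO-y}), I would reach $\alpha(\theta) ||e_r^\psi||_V^2 \leq ||R_\psi(\cdot, \psi_r; \theta)||_{V'} ||e_r^\psi||_V + C_\cO ||e_r^u||_V ||e_r^\psi||_V$, and dividing by $||e_r^\psi||_V$ delivers the second bound.

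The main obstacle is the adjoint estimate, specifically the bookkeeping of the data-term mismatch: one must recognize that the difference of the two adjoint right-hand sides is precisely the linear functional $-\cO(e_r^u)^T \Gamma^{-1} \cO(\cdot)$, which is what produces the additive $\tfrac{C_\cO}{\alpha(\theta)} ||e_r^u||_V$ term and couples the adjoint error to the state error. Everything else—the state estimate and the final coercivity-plus-dual-norm steps—is a routine Galerkin-orthogonality computation.
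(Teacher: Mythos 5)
Your proposal is correct and follows essentially the same route as the paper: derive the error equation $A(e_r^u, v_h;\theta) = -R_u(u_r, v_h;\theta)$ and test with $e_r^u$, then for the adjoint isolate the data mismatch $\nabla_u \eta_y|_{u_r} - \nabla_u \eta_y|_{u_h} = -\cO(e_r^u)^T\Gamma^{-1}\cO(\cdot)$, test with $e_r^\psi$, and apply coercivity with the dual norms, yielding the coupling term $C_\cO\,\alpha(\theta)^{-1}\|e_r^u\|_V$ exactly as in the paper's argument. Your explicit identification of the bracketed term as a linear functional of $e_r^u$ (exploiting the quadratic structure of $\eta_y$ via \eqref{eq:nabla_u-eta}) is precisely the step the paper performs when rewriting $R_\psi$ with the $u_h$-based data plus the correction.
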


\begin{proof}
	By the high-fidelity state equation \eqref{eq:state-HiFi}, the residual \eqref{eq:state-residual} becomes 
	\begin{equation}
	R_u(u_r, v_h; \theta) = A(u_r, v_h; \theta) - A(u_h, v_h; \theta) = - A(e_r^u, v_h; \theta).
	\end{equation}
	By replacing $v_h = e_r^u$, and using Assumption \ref{ass:AF} we have 
	\begin{equation}
	\alpha(\theta)||e_r^u||_V^2 \leq A(e_r^u, e_r^u; \theta) = -R_u(u_r, e_r^u; \theta) \leq ||R_u(u_r, \cdot;\theta)||_{V'} ||e_r^u||_V,
	\end{equation}
	which concludes for the estimate of $e_r^u$. To estimate $e_r^\psi$, we have 
	\begin{equation}
	R_\psi(w_h, \psi_r ;\theta) = A(w_h, \psi_r;\theta) + \nabla_u\eta_y|_{u_h}(w_h) + (\nabla_u\eta_y|_{u_r}(w_h) - \nabla_u\eta_y|_{u_h}(w_h)),
	\end{equation}
	which, by the adjoint high-fidelity equation \eqref{eq:adjoint-HiFi} and $\nabla_u \eta_y$ in \eqref{eq:nabla_u-eta}, gives 
	\begin{equation}
	-A(w_h, e_r^\psi; \theta) = R_\psi(w_h, \psi_r;\theta) + \cO(e_r^u) \Gamma^{-1} \cO(w_h).
	\end{equation}
	By replacing $w_h = e_r^\psi$, under Assumption \ref{ass:AF}, we have 
	\begin{equation}
	\begin{split}
	\alpha(\theta)||e_r^\psi||_V^2 & \leq A(e_r^\psi, e_r^\psi; \theta)  \\
	& = -R_\psi(w_h, \psi_r;\theta) - \cO(e_r^u) \Gamma^{-1} \cO(e_r^\psi)\\
	& \leq ||R_\psi(\cdot, \psi_r; \theta)||_{V'}||e_r^\psi||_V + C_\cO ||e_r^u||_V||e_r^\psi||_V,
	\end{split}
	\end{equation}
	which concludes.
\end{proof}

For $j = 1, \dots, d$, we denote the solutions of the high-fidelity and reduced basis approximations of \eqref{eq:partial-theta-u} in spaces $V_h$ and $V_r$ as $\partial_{\theta_j} u_h(\theta)$ and $\partial_{\theta_j} u_r(\theta)$, the solutions of the high-fidelity and reduced basis approximations of \eqref{eq:partial-theta-psi} in spaces $V_h$ and $W_r$ as $\partial_{\theta_j} \psi_h(\theta)$ and $\partial_{\theta_j} \psi_r(\theta)$, respectively.
Let $R_u^j(u_r,v_h;\theta)$ and $R_\psi^j(w_h, \psi_r;\theta)$ denote the residuals of \eqref{eq:partial-theta-u} and \eqref{eq:partial-theta-psi} as
\begin{equation}
R_u^j(u_r,v_h;\theta) = A(\partial_{\theta_j} u_r(\theta), v_h; \theta ) + \partial_{\theta_j} A(u_r(\theta), v_h; \theta) - \partial_{\theta_j} F(v_h;\theta) \quad \forall v_h \in V_h,
\end{equation}
and 
\begin{equation}
\begin{split}
R_\psi^j(w_h, \psi_r;\theta) & = A(w_h, \partial_{\theta_j} \psi_r(\theta); \theta) + \partial_{\theta_j} A(w_h, \psi_r(\theta);\theta) \\
& - (y - \cO(\partial_{\theta_j} u_r(\theta)))^T \Gamma^{-1} \cO(w_h)\quad \forall w_h \in V_h.
\end{split}
\end{equation}
\begin{lemma}
	Under Assumption \ref{ass:AF}, for any $\theta \in \Theta$, we have 
	\begin{equation}
	||\nabla_\theta e_r^u(\theta)||_{V^d} \leq \frac{1}{\alpha(\theta)} \sum_{j=1}^d ||R_u^j( u_r, \cdot; \theta)||_{V'} +\frac{1}{\alpha(\theta)} \sum_{j=1}^d \rho_j(\theta) ||e_r^u(\theta)||_V，
	\end{equation}
	and 
	\begin{equation}
	\begin{split}
	||\nabla_\theta e_r^\psi(\theta)||_{V^d} & \leq \frac{1}{\alpha(\theta)} \sum_{j=1}^d ||R_\psi^j(\cdot, \psi_r ; \theta)||_{V'} \\
	& + \frac{1}{\alpha(\theta)} \sum_{j=1}^d \rho_j(\theta) ||e_r^\psi(\theta)||_V  + \frac{C_\cO}{\alpha(\theta)} ||\nabla_\theta e_r^u(\theta)||_{V^d}.
	\end{split}
	\end{equation}
\end{lemma}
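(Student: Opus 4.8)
The plan is to mirror the argument used to prove Lemma~\ref{lemma:bound-u-psi-h-r}, now applied to the sensitivity equations \eqref{eq:state-partial-theta} and \eqref{eq:adjoint-partial-theta} rather than to the state and adjoint equations themselves. The key observation is that the high-fidelity sensitivities $\partial_{\theta_j} u_h$ and $\partial_{\theta_j}\psi_h$ solve the high-fidelity analogues of \eqref{eq:state-partial-theta} and \eqref{eq:adjoint-partial-theta} posed on $V_h$ (with $u,\psi$ replaced by $u_h,\psi_h$), so that the residuals $R_u^j$ and $R_\psi^j$ vanish when evaluated at these high-fidelity quantities. Since $\partial_{\theta_j} e_r^u = \partial_{\theta_j} u_h - \partial_{\theta_j} u_r \in V_h$ and likewise $\partial_{\theta_j} e_r^\psi \in V_h$, these error functions are admissible test functions for the residuals, which is precisely what makes the coercivity argument go through.

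For the state bound, first I would subtract the high-fidelity version of \eqref{eq:state-partial-theta} from the definition of $R_u^j(u_r, v_h; \theta)$ to obtain, for all $v_h \in V_h$,
$$A(\partial_{\theta_j} e_r^u, v_h; \theta) = -R_u^j(u_r, v_h; \theta) - \partial_{\theta_j} A(e_r^u, v_h; \theta),$$
where the extra coupling term $\partial_{\theta_j} A(e_r^u, v_h; \theta)$ arises from differentiating the bilinear form. Taking $v_h = \partial_{\theta_j} e_r^u$, applying coercivity A1 of Assumption~\ref{ass:AF} on the left and bounding the right-hand side by the dual norm of $R_u^j$ and the continuity bound $\rho_j(\theta)$ from A2, then dividing by $||\partial_{\theta_j} e_r^u||_V$, yields $\alpha(\theta) ||\partial_{\theta_j} e_r^u||_V \leq ||R_u^j(u_r, \cdot; \theta)||_{V'} + \rho_j(\theta) ||e_r^u(\theta)||_V$. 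Summing over $j = 1, \dots, d$ and dividing by $\alpha(\theta)$ gives the first estimate.

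For the adjoint bound, the same subtraction applied to $R_\psi^j$ produces two additional terms: the coupling $\partial_{\theta_j} A(w_h, e_r^\psi; \theta)$ and, crucially, the observation coupling $\cO(\partial_{\theta_j} e_r^u)^T \Gamma^{-1} \cO(w_h)$ coming from the $\cO(\partial_{\theta_j} u)$ dependence of the right-hand side of \eqref{eq:adjoint-partial-theta}. Testing with $w_h = \partial_{\theta_j} e_r^\psi$ and bounding this last term by $C_\cO ||\partial_{\theta_j} e_r^u||_V ||\partial_{\theta_j} e_r^\psi||_V$, with $C_\cO$ from \eqref{eq:Gamma-cO-y}, gives after dividing by $||\partial_{\theta_j} e_r^\psi||_V$ the bound $\alpha(\theta) ||\partial_{\theta_j} e_r^\psi||_V \leq ||R_\psi^j(\cdot, \psi_r; \theta)||_{V'} + \rho_j(\theta) ||e_r^\psi(\theta)||_V + C_\cO ||\partial_{\theta_j} e_r^u||_V$. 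Summing over $j$, dividing by $\alpha(\theta)$, and recognizing that $\sum_{j=1}^d ||\partial_{\theta_j} e_r^u||_V = ||\nabla_\theta e_r^u(\theta)||_{V^d}$ yields the second estimate. The main obstacle is purely bookkeeping: correctly tracking the coupling terms generated by differentiating the bilinear form and the observation functional, and confirming that the error sensitivities remain in $V_h$ so that the residuals, defined only for test functions in $V_h$, may legitimately be evaluated at them.
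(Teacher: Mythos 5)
Your proposal is correct and follows essentially the same route as the paper: the paper likewise derives the error equations $A(\partial_{\theta_j} e_r^u, v_h;\theta) = -R_u^j(u_r, v_h;\theta) - \partial_{\theta_j} A(e_r^u, v_h;\theta)$ and its adjoint analogue with the extra $\cO(\partial_{\theta_j} e_r^u)^T\Gamma^{-1}\cO(w_h)$ coupling, tests with $\partial_{\theta_j} e_r^u$ and $\partial_{\theta_j} e_r^\psi$ respectively, applies coercivity with the A2 bound $\rho_j(\theta)$ and the constant $C_\cO$, and sums over $j$. Your explicit remarks that the high-fidelity sensitivities annihilate the residuals and that $\partial_{\theta_j} e_r^u, \partial_{\theta_j} e_r^\psi \in V_h$ are admissible test functions are left implicit in the paper but are exactly the right justification.
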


\begin{proof}
	By definition of the residual $R_\psi^j(w_h, \psi_r;\theta)$, we have 
	\begin{equation}
	R_\psi^j(w_h, \psi_r;\theta) = -A(\partial_{\theta_j} e_r^u(\theta), v_h;\theta) - \partial_{\theta_j} A(e_r^u(\theta), v_h;\theta),
	\end{equation}
	which, by replacing $v_h = \partial_{\theta_j} e_r^u$, and using Assumption \ref{ass:AF}, leads to 
	\begin{equation}
	\alpha(\theta) ||\partial_{\theta_j} e_r^u(\theta)||_V \leq ||R^j_u(u_r, \cdot;\theta)||_{V'} + \rho_j(\theta) ||e_r^u(\theta)||_V,
	\end{equation}
	which concludes the first estimate by summing over $j = 1, \dots, d$. Similarly, by definition of $R_\psi^j(w_h, \psi_r;\theta)$, we have 
	\begin{equation}
	R_\psi^j(w_h, \psi_r;\theta) = -A(w_h, \partial_{\theta_j} e_r^\psi(\theta);\theta) - \partial_{\theta_j} A(w_h, e_r^\psi(\theta);\theta) - \cO(\partial_{\theta_j} e_r^u(\theta)) \Gamma^{-1} \cO(w_h),
	\end{equation}
	which, by replacing $w_h = \partial_{\theta_j} e_r^\psi(\theta)$ and using Assumption \ref{ass:AF}, leads to 
	\begin{equation}
	\alpha(\theta) ||\partial_{\theta_j} e_r^\psi(\theta)||_V \leq ||R_\psi^j(\cdot, \psi_r ; \theta)||_{V'} + \rho_j(\theta) ||e_r^\psi(\theta)||_V + C_\cO ||\nabla_{\theta_j} e_r^u(\theta)||_V,
	\end{equation}
	which concludes the second estimate by summing over $j = 1, \dots, d$.
\end{proof}

\subsection{Error estimates for SVGD samples}
\label{sec:estimates4samples}
Let $\theta_n^{0,h} = \theta_n^{0,r} = \theta_n^{0,\Delta} = \theta_n$, $n = 1, \dots, M$, denote the samples drawn from the prior distribution. Let $\theta_n^{l,h}$, $\theta_n^{l,r}$, and $\theta_n^{l,\Delta}$, $l = 1, \dots, L$, $n = 1, \dots, M$, denote the samples obtained by the SVGD Algorithm \ref{alg:SVGD}, and $\alpha_l^h$, $\alpha_l^r$, $\alpha_l^\Delta$, $l = 0, \dots, L-1$, denote the step sizes with $\eta_y$ approximated by $\eta_y^h$, $\eta_y^r$, and $\eta_y^\Delta$, respectively. 

\begin{assumption}
We assume that there exists a constant $C_p$ such that 
\begin{equation}\label{eq:Lips-p-0}
\left|\left|\frac{\nabla_\theta p_0(\theta)}{p_0(\theta)} - \frac{\nabla_\theta p_0(\theta')}{p_0(\theta')} \right|\right|_1 
\leq 
C_p ||\theta - \theta'||_1.
\end{equation}
Moreover, there exists a constant $C_\eta$ such that 
\begin{equation}\label{eq:Lips-eta}
||\nabla_\theta \eta_y^h(\theta) - \nabla_\theta \eta_y^h(\theta')||_1 \leq C_\eta ||\theta - \theta'||_1.
\end{equation}
\end{assumption}

\begin{theorem}
	Under Assumption \ref{ass:AF}, and assume that the step sizes $\alpha_i^h = \alpha_i^r$ for $i = 0, \dots, l$, then we have 
	\begin{equation}
	||\theta_n^{l+1, h} - \theta_n^{l+1, r} ||_1 \leq \sum_{i=0}^{l} C_i \frac{1}{M}\sum_{m=1}^M ||\nabla_\theta e_r^\eta(\theta_m^{i,r})||_1,
	\end{equation}
	for some constants $C_i$, $i = 0, \dots, l$.
\end{theorem}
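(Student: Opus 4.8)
The plan is to prove the estimate by induction on $l$, unrolling the SVGD recursion \eqref{eq:samples} and, at each step, isolating the part of the update discrepancy that is directly attributable to the gradient error $\nabla_\theta e_r^\eta$ from the part that merely transports the error already present in the particle positions. Since $\alpha_i^h = \alpha_i^r =: \alpha_i$ by hypothesis and the initial ensembles coincide, subtracting the reduced-basis update from the high-fidelity update gives, for the empirical maps $\hat{Q}_l^h$ and $\hat{Q}_l^r$ defined as in \eqref{eq:pertubation} with $\nabla_\theta\log p_y$ evaluated through $\eta_y^h$ and $\eta_y^r$ on the respective ensembles,
\begin{equation}
\theta_n^{l+1,h} - \theta_n^{l+1,r} = (\theta_n^{l,h} - \theta_n^{l,r}) + \alpha_l\left(\hat{Q}_l^h(\theta_n^{l,h}) - \hat{Q}_l^r(\theta_n^{l,r})\right).
\end{equation}

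Next I would perform a consistency--stability splitting by inserting the intermediate quantity $\hat{Q}_l^h(\theta_n^{l,r})$, the high-fidelity map built on the \emph{reduced} ensemble:
\begin{equation}
\hat{Q}_l^h(\theta_n^{l,h}) - \hat{Q}_l^r(\theta_n^{l,r}) = \left(\hat{Q}_l^h(\theta_n^{l,h}) - \hat{Q}_l^h(\theta_n^{l,r})\right) + \left(\hat{Q}_l^h(\theta_n^{l,r}) - \hat{Q}_l^r(\theta_n^{l,r})\right).
\end{equation}
In the second (consistency) term the two maps are built on the same particle locations, so by \eqref{eq:gradient_logposterior} the prior contribution $\nabla_\theta p_0/p_0$ and the repulsion term $\nabla_{\theta_m} k$ cancel, leaving only $\nabla_\theta\eta_y^r - \nabla_\theta\eta_y^h = -\nabla_\theta e_r^\eta$ weighted by the kernel. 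Since the radial basis kernel \eqref{eq:kernel} satisfies $0 < k \le 1$, this term is bounded in $\ell^1$-norm by $\tfrac{1}{M}\sum_{m=1}^M \|\nabla_\theta e_r^\eta(\theta_m^{l,r})\|_1$, which is exactly the summand appearing in the claim at index $i=l$.

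The first (stability) term is the single map $\hat{Q}_l^h$ evaluated on two nearby ensembles, which I would control by a Lipschitz estimate. Using that $g^h := -\nabla_\theta\eta_y^h + \nabla_\theta p_0/p_0$ is Lipschitz with constant $C_\eta + C_p$ from \eqref{eq:Lips-eta} and \eqref{eq:Lips-p-0}, together with boundedness and Lipschitz continuity of $k$ and $\nabla k$, one obtains a constant $L$, independent of $l$, with
\begin{equation}
\left\|\hat{Q}_l^h(\theta_n^{l,h}) - \hat{Q}_l^h(\theta_n^{l,r})\right\|_1 \le L\left(\|\theta_n^{l,h}-\theta_n^{l,r}\|_1 + \frac{1}{M}\sum_{m=1}^M\|\theta_m^{l,h}-\theta_m^{l,r}\|_1\right).
\end{equation}
Writing $D^l := \max_n \|\theta_n^{l,h}-\theta_n^{l,r}\|_1$ and $E_l := \tfrac{1}{M}\sum_m\|\nabla_\theta e_r^\eta(\theta_m^{l,r})\|_1$, the two bounds combine into the scalar recursion $D^{l+1}\le (1+2L\alpha_l)D^l + \alpha_l E_l$ with $D^0=0$. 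A discrete Grönwall argument unrolls this into $D^{l+1}\le \sum_{i=0}^l C_i E_i$ with explicit constants $C_i = \alpha_i\prod_{j=i+1}^l(1+2L\alpha_j)$, and the claim follows since $\|\theta_n^{l+1,h}-\theta_n^{l+1,r}\|_1 \le D^{l+1}$.

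The main obstacle is the uniform Lipschitz/boundedness estimate for the empirical SVGD map. The difficulty is twofold: the map couples all $M$ particles, so the per-particle bound must be closed through the ensemble average (handled here by passing to $D^l$), and the constant $L$ must be uniform in $l$, which requires the particle trajectories to stay in a bounded region so that $\|g^h\|$, $k$, and $\nabla k$ remain controlled. The cross terms of the form $g^h\,\nabla_2 k$ that arise when the evaluation point $\theta_n$ moves are precisely where boundedness of $g^h$, and not merely its Lipschitz continuity, is needed.
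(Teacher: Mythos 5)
Your proof follows essentially the same route as the paper's: you unroll the same SVGD recursion, split the update discrepancy into a consistency part (same particle positions, different potentials, where the prior and repulsion terms cancel and $0 < k \le 1$ yields the term $\frac{1}{M}\sum_{m=1}^M \|\nabla_\theta e_r^\eta(\theta_m^{l,r})\|_1$) and a stability part controlled via the Lipschitz bounds \eqref{eq:Lips-p-0}, \eqref{eq:Lips-eta} together with the kernel estimates (the paper's constants $C_{k,1}$, $C_{k,2}$), and close the resulting scalar recursion by a discrete Gr\"onwall argument. If anything, your version is slightly more explicit than the paper's: you write out the Gr\"onwall constants $C_i = \alpha_i \prod_{j=i+1}^{l}(1+2L\alpha_j)$, and you correctly flag that the cross term requires boundedness of the log-posterior gradient (the paper's bound involving $\|\nabla_\theta \log(p_y^r(\theta_m^{l,r}))\|_1$ tacitly assumes this, with the paper ending at ``a combination of the above bounds concludes'').
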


\begin{proof}
	At $l = 0 $, by the definition of the samples \eqref{eq:samples} and \eqref{eq:pertubation}, under the assumptions $\theta_n^{0,h} = \theta_n^{0,r} =\theta_n^0$, $n = 1, \dots, M$, and $\alpha_0^h = \alpha_0^r = \alpha_0$,
	we have 
	\begin{equation}
	\theta_n^{1, h} - \theta_n^{1, r} = \alpha_0 \frac{1}{M} \sum_{m = 1}^M (\nabla_\theta \eta_y^r(\theta_m^{0})-\nabla_\theta \eta_y^h(\theta_m^{0})) k(\theta_m^{0}, \theta_n^{0}),
	\end{equation}
	which can be bounded by 
	\begin{equation}
	||\theta_n^{1, h} - \theta_n^{1, r}||_1 \leq \alpha_0 \frac{1}{M} \sum_{m=1}^M ||\nabla_\theta e_r^\eta(\theta_m^{0})||_1, \quad n = 1, \dots, M.
	\end{equation}
	For $l > 0$, we have 
	\begin{equation}
	\begin{split}
	\theta_n^{l+1, h} - \theta_n^{l+1, r} & = \theta_n^{l, h} - \theta_n^{l, r} \\
	& +  \alpha_l \frac{1}{M} \sum_{m = 1}^M \nabla_\theta \log(p_y^h(\theta_m^{l,h})) k(\theta_m^{l,h}, \theta_n^{l, h}) + \nabla_\theta k(\theta_m^{l,h}, \theta_n^{l, h})\\
	& -  \alpha_l \frac{1}{M} \sum_{m = 1}^M \nabla_\theta \log(p_y^r(\theta_m^{l,r})) k(\theta_m^{l,r}, \theta_n^{l, r}) + \nabla_\theta k(\theta_m^{l,r}, \theta_n^{l, r}).
	\end{split}
	\end{equation}
	To bound the error, we first consider 
	\begin{equation}
	\begin{split}
	& \nabla_\theta \log(p_y^h(\theta_m^{l,h})) k(\theta_m^{l,h}, \theta_n^{l, h}) - \nabla_\theta \log(p_y^r(\theta_m^{l,r})) k(\theta_m^{l,r}, \theta_n^{l, r})\\
	& \leq || \nabla_\theta \log(p_y^h(\theta_m^{l,h})) - \nabla_\theta \log(p_y^r(\theta_m^{l,r}))||_1 k(\theta_m^{l,h}, \theta_n^{l, h}) \\
	& + ||\nabla_\theta \log(p_y^r(\theta_m^{l,r}))||_1 |k(\theta_m^{l,h}, \theta_n^{l, h}) - k(\theta_m^{l,r}, \theta_n^{l, r})|.
	\end{split} 
	\end{equation}
	By definition, we have
	\begin{equation}
	\begin{split}
	& \nabla_\theta \log(p_y^h(\theta_m^{l,h})) - \nabla_\theta \log(p_y^r(\theta_m^{l,r})) \\
	& = -\nabla_\theta \eta_y^h(\theta_m^{l,h}) + \nabla_\theta \eta_y^r(\theta_m^{l,r}) + \left(
	\frac{\nabla_\theta p_0(\theta_m^{l,h})}{p_0(\theta_m^{l,h})} - \frac{\nabla_\theta p_0(\theta_m^{l,r})}{p_0(\theta_m^{l,r})},
	\right)
	\end{split}
	\end{equation}
	where by assumption \eqref{eq:Lips-p-0}, the second term can be bounded by 
	\begin{equation}\label{eq:p0-theta-h-r}
	\left|\left|\frac{\nabla_\theta p_0(\theta_m^{l,h})}{p_0(\theta_m^{l,h})} - \frac{\nabla_\theta p_0(\theta_m^{l,r})}{p_0(\theta_m^{l,r})} \right|\right|_1 
	\leq 
	C_0 ||\theta_m^{l,h} - \theta_m^{l,r}||_1.
	\end{equation}
	The first term can be written as 
	\begin{equation}
	-\nabla_\theta \eta_y^h(\theta_m^{l,h}) + \nabla_\theta \eta_y^r(\theta_m^{l,r}) = -\nabla_\theta \eta_y^h(\theta_m^{l,h}) + \nabla_\theta \eta_y^h(\theta_m^{l,r}) -  \nabla_\theta \eta_y^h(\theta_m^{l,r}) +  \nabla_\theta \eta_y^r(\theta_m^{l,r}),
	\end{equation}
	which can be bounded by 
	\begin{equation}\label{eq:eta-theta-h-r}
	||-\nabla_\theta \eta_y^h(\theta_m^{l,h}) + \nabla_\theta \eta_y^r(\theta_m^{l,r})||_1 
	\leq 
	C_\eta ||\theta_m^{l,h} - \theta_m^{l,r}||_1 +  ||\nabla e_r^\eta(\theta_m^{l,r})||_1,
	\end{equation}
	where in the first term we used the assumption \eqref{eq:Lips-eta}. 
	Therefore, combining \eqref{eq:p0-theta-h-r} and \eqref{eq:eta-theta-h-r} we obtain 
	\begin{equation}
	||\nabla_\theta \log(p_y^h(\theta_m^{l,h})) - \nabla_\theta \log(p_y^r(\theta_m^{l,r}))||_1 \leq (C_p+C_\eta) ||\theta_m^{l,h} - \theta_m^{l,r}||_1 +  ||\nabla e_r^\eta(\theta_m^{l,r})||_1.
	\end{equation}
	Moreover, by definition of the kernel $k(\theta, \theta')$ in \eqref{eq:kernel}, it is easy to show that  
	\begin{equation}
	|k(\theta_m^{l,h}, \theta_n^{l,h}) - k(\theta_m^{l,r}, \theta_n^{l,r})| \leq C_{k,1} (||\theta_m^{l,h} - \theta_m^{l,r}||_1 + ||\theta_n^{l,h} - \theta_n^{l,r}||_1),
	\end{equation}
	and
	\begin{equation}
	||\nabla_\theta k(\theta_m^{l,h}, \theta_n^{l,h}) - \nabla_\theta k(\theta_m^{l,r}, \theta_n^{l,r})||_1 \leq C_{k,2} (||\theta_m^{l,h} - \theta_m^{l,r}||_1 + ||\theta_n^{l,h} - \theta_n^{l,r}||_1),
	\end{equation}
	for some constants $C_{k,1}$ and $C_{k,2}$. A combination of the above bounds concludes.
	
	
\end{proof}

\section{Numerical example with Gaussian prior distribution}
\label{sec:Gaussain}


In this example, we consider the parameter $\theta = (\theta_1, \dots, \theta_9)$ with the $d = 9$ random components obeying i.i.d.\ Gaussian distribution $\cN(0,1)$. The coefficients are set as $c_j(\theta) = e^{\theta_j/2}$ with $j = 1, \dots, 9$. The basis functions are given by 
$a_0 = 0$ and $a_j = \chi_{D_j}$, where $\chi_{D_j}$ is a characteristic function with $\chi_{D_j}(x) = 1$ for $x \in D_j$ and $\chi_{D_j}(x) = 0$ otherwise. $D_j$, $j = 1, \dots, 9$, are a uniform square partition of the domain $D$ each with area $1/9$, ordered from left to right, bottom to top.

\begin{figure}[!htb]
	\centering
	\includegraphics[trim=0 0 40 40,clip=True,width=0.48\textwidth]{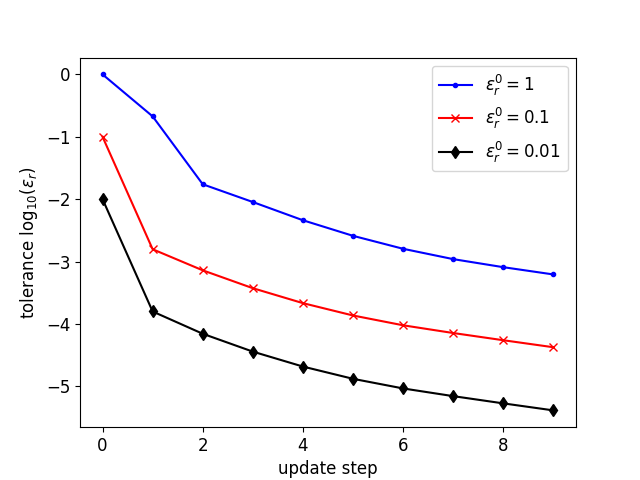}
	\includegraphics[trim=0 0 40 40,clip=True,width=0.48\textwidth]{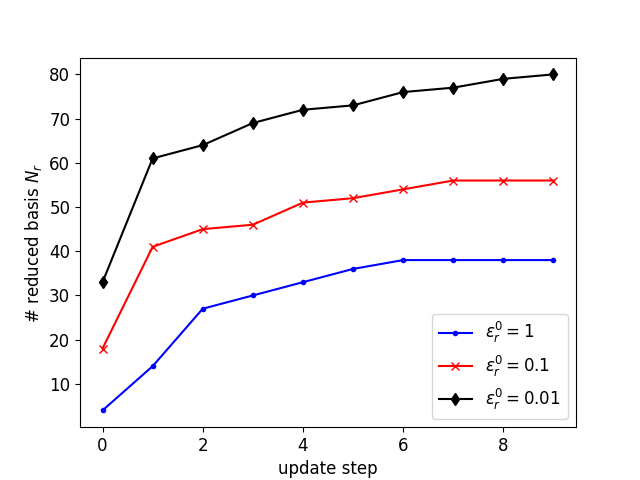}
	\caption{Change of tolerances (left) and the number of reduced basis functions (right) w.r.t.\ the RB update step $i$ with SVGD step $l = i K$ and $K = 20$ in Algorithm \ref{alg:adaptive-greedy}.}
	\label{fig:rom-update-lognormal}
\end{figure}

We run Algorithm \ref{alg:adaptive-greedy} for the construction of reduced basis approximations and their applications in the SVGD process. We set the initial tolerance as $\varepsilon_r^0 = 1, 0.1, 0.01$, respectively, and update the reduced basis approximations every $K = 20$ SVGD steps with new tolerance given by $\varepsilon_r^l = \varepsilon_r^0 t_l$ with $t_l$ defined in \eqref{eq:t-l}. The changes of the tolerances and the number of reduced basis functions for different initial tolerances are shown in Fig.\ \ref{fig:rom-update-lognormal}, from which we can see that as the gradient norm of SVGD update decreases, i.e., the particles become closer to following the posterior distribution, the reduced basis approximations become more accurate with larger number of reduced basis functions.

\begin{figure}[!htb]
	\centering
	\begin{subfigure}{0.59\textwidth}
		\centering
		\centering
		\includegraphics[trim=80 10 80 30,clip=True,width=0.32\textwidth]{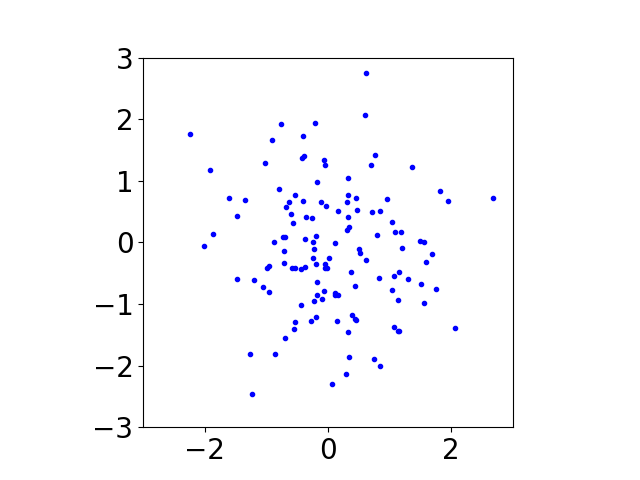}
		\includegraphics[trim=80 10 80 30,clip=True,width=0.32\textwidth]{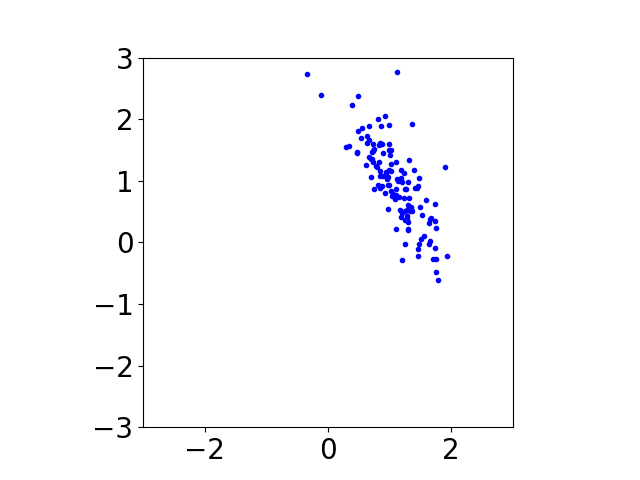}
		\includegraphics[trim=80 10 80 30,clip=True,width=0.32\textwidth]{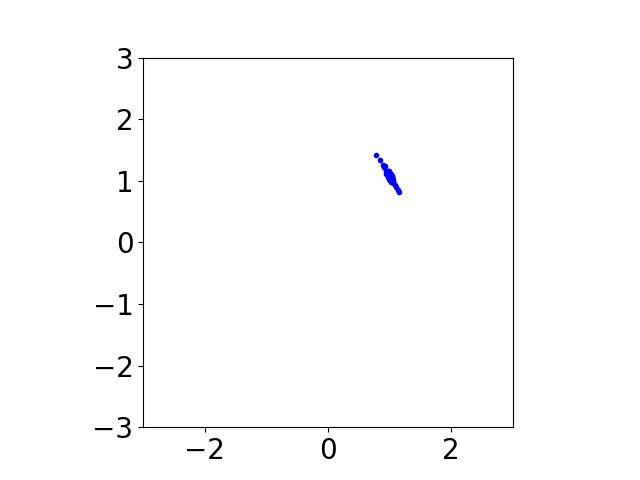}
		
		\includegraphics[trim=80 10 80 30,clip=True,width=0.32\textwidth]{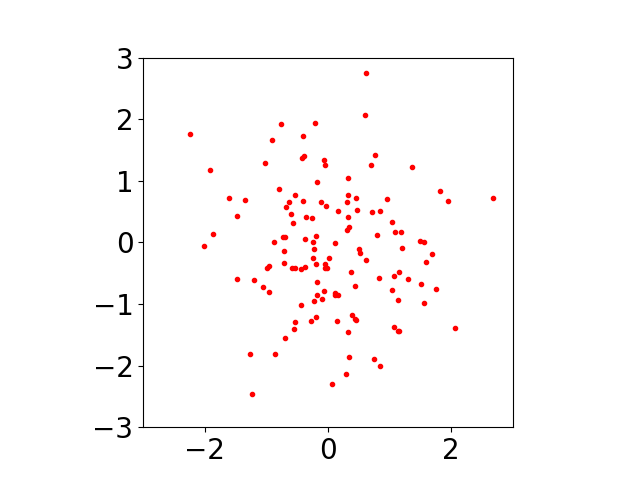}
		\includegraphics[trim=80 10 80 30,clip=True,width=0.32\textwidth]{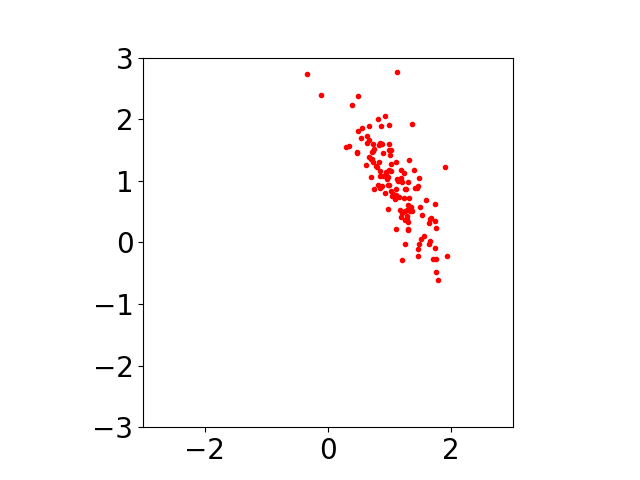}
		\includegraphics[trim=80 10 80 30,clip=True,width=0.32\textwidth]{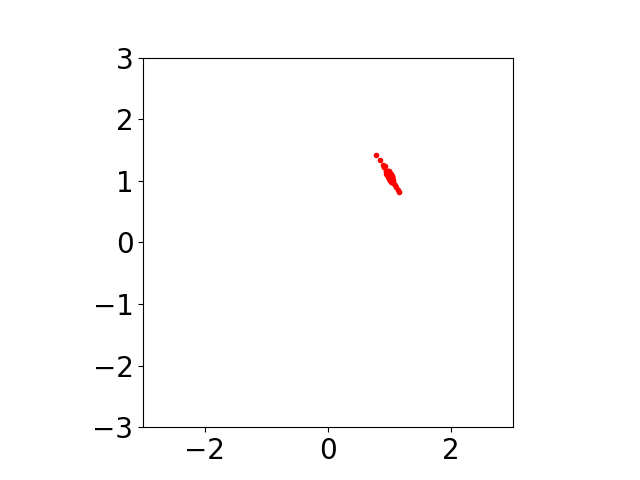}
		\caption{Locations of 128 particles ($\theta_1, \theta_2$) at SVGD step $l = 0$ (left), $19$ (middle), $199$ (right) by high-fidelity (top) and reduced basis (bottom) approximations.}
	\end{subfigure}%
	~ 
	\begin{subfigure}{0.39\textwidth}
		\centering
		\includegraphics[trim=70 10 80 30,clip=True,width=0.96\textwidth]{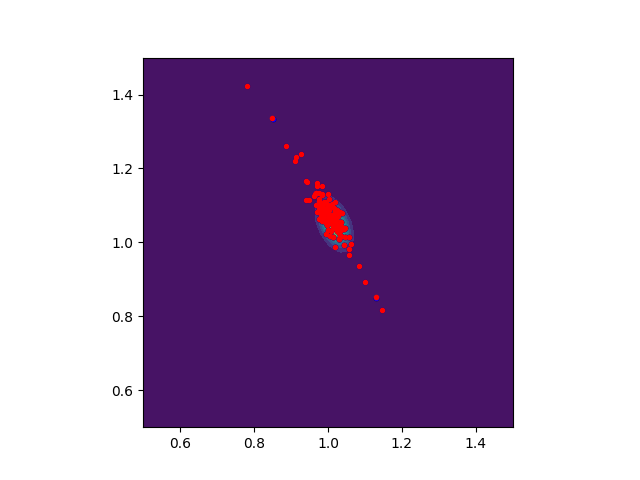}
		\caption{Contour of the marginal posterior density for $(\theta_1, \theta_2)$ and locations of particles at SVGD step $l = 199$.}
	\end{subfigure}
	\caption{Comparision of particles by high-fidelity and reduced basis approximations.}
	\label{fig:particles-lognormal}
\end{figure}

Fig.\ \ref{fig:particles-lognormal} depicts the update of 128 particles (projected in dimension ($\theta_1, \theta_2$)) by SVGD with high-fidelity and reduced basis approximations (with initial tolerance $\varepsilon_r^0 = 0.01$) of the PDE models, respectively. At the initial step $l = 0$, we randomly draw 128 samples from the Gaussian prior distribution, as shown in the left two figures of part (a), and use them for both the high-fidelity and reduced basis approximations. At SVGD step $l = 19$ and $l = 199$, the updated particles with different approximations are displayed in the middle and right two figures of part (a), which appear very close to each other. More details are shown in part (b) in the enlarged region where the marginal posterior density of high-fidelity approximation in dimension ($\theta_1, \theta_2$) is evidently different from zero, from which we can see that the particles obtained by the reduced basis approximations are very close to those by the high-fidelity approximations, which both have effectively good empirical representation of the posterior distribution. 

\begin{figure}[!htb]
	\centering
	\includegraphics[trim=0 0 40 40,clip=True,width=0.48\textwidth]{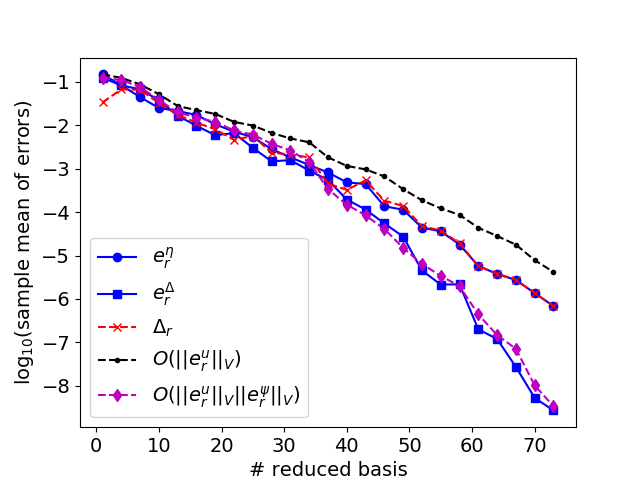}
	\includegraphics[trim=0 0 40 40,clip=True,width=0.48\textwidth]{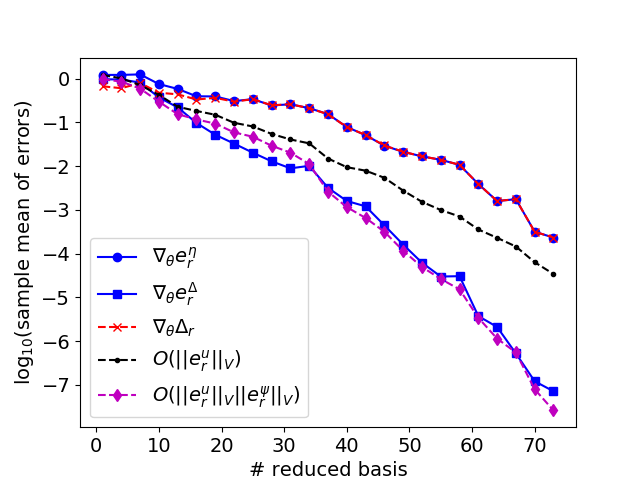}
	
	\includegraphics[trim=0 0 40 40,clip=True,width=0.48\textwidth]{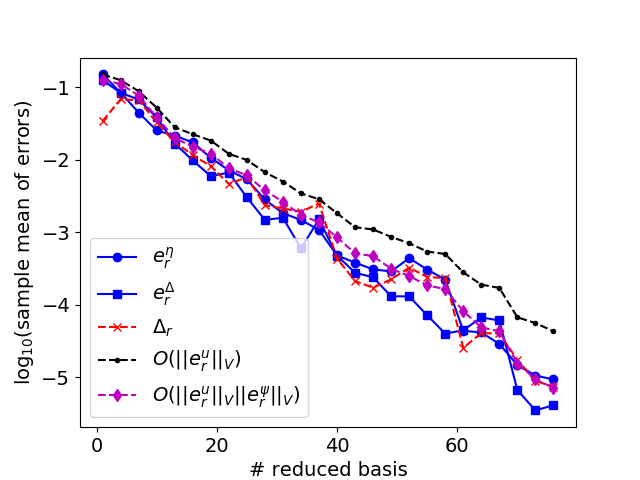}
	\includegraphics[trim=0 0 40 40,clip=True,width=0.48\textwidth]{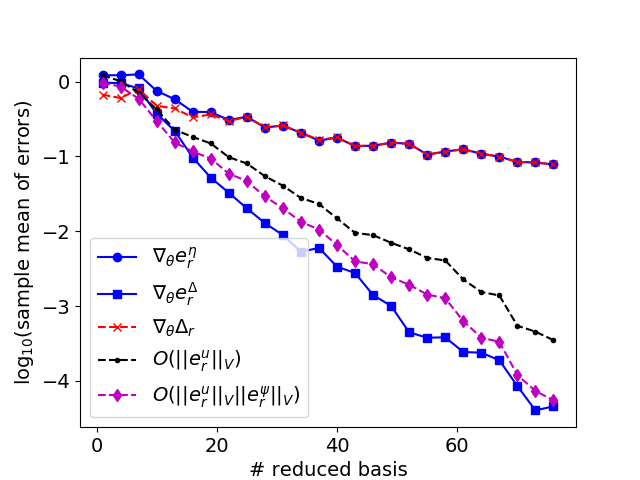}
	\caption{Sample mean of the approximation errors and estimates for adaptive RB (top) and fixed RB (bottom) approximations of $\eta_y^h$ (left) and $\nabla_\theta \eta_y^h$ (right) at step $l = 199$.}
	\label{fig:accuracy-lognormal}
\end{figure}

Fig.\ \ref{fig:accuracy-lognormal} demonstrates the accuracy of the reduced basis approximations of the potential $\eta_y$ and its gradient $\nabla_\theta \eta_y$, the efficacy of the error estimate $\Delta_r$ used in the greedy algorithm and various error bounds, as well as the advantage of the adaptive greedy construction. More specifically, from the top two figures on the decay of the sample averaged reduced basis approximation errors $e_r^\eta$ and $e_r^\Delta$ at SVGD step $l = 99$, obtained by the adaptive greedy Algorithm \ref{alg:adaptive-greedy}, we can see that the averaged error $e_r^\eta$ decay asymptotically as the averaged error bound $||e_r^u||_V$ (which is rescaled by a constant such that the error and the bound are equal at $N_r = 1$), as predicted by Lemma \ref{lemma:bound-eta-h-r}. Moreover, the averaged error $e_r^\Delta$ decay asymptotically as the averaged error bound $||e_r^u||_V ||e_r^\psi||_V$, as predicted by Lemma \ref{lemma:bound-eta-h-r} where we note that $||e_r^u||_V ||e_r^\psi||_V$ dominates $||e_r^u||_V^2$. By comparison of $e_r^\eta$ and $e_r^\Delta$, we can also see that, $\eta_y^\Delta$, the reduced basis approximation of the potential, $\eta_y^r$, corrected by the dual weighted residual $\Delta_r$, is much more accurate than $\eta_y^r$ itself, especially when the number of reduced basis functions becomes large. This observation can be confirmed by the closeness of the residual $\Delta_r$ and the error $e_r^\eta$ as shown in the top-left part of Fig. \ref{fig:accuracy-lognormal}. Similar conclusion in alignment with Lemma \ref{lemma:grad-e-r-eta} can be drawn for the reduced basis approximation of the gradient of the potential $\nabla_\theta \eta_y^h$, as depicted in the top-right part of the figure. Note that we did not compute the norm of the gradients of the state and adjoint, i.e., $||\nabla_\theta e_r^u||_{V^d}$ and $||\nabla_\theta e_r^\psi||_{V^d}$, as they involve solving additional $2d$ PDE problems presented in Section \ref{sec:upsigrad}, which are not needed in the adaptive greedy Algorithm \ref{alg:adaptive-greedy}. The bottom two figures of Fig.\ \ref{fig:accuracy-lognormal} show the decay of the errors and bounds for reduced basis approximation with the fixed reduced basis functions constructed at the initial step of SVGD with tolerance $\varepsilon_r = 10^{-5}$, in contrast with the adaptive construction. We can see that the reduced basis approximations constructed once and used for all later SVGD evaluations become less accurate than the reduced basis approximations by the adaptive construction, both for the approximation of $\eta_y$ and $\nabla_\theta \eta_y$, even when the number of reduced basis functions of the former is much larger than the latter. This demonstrates the advantage of the adaptive greedy construction in terms of accuracy of the reduced basis approximations.

\begin{table}[h!]
	\centering
	\begin{tabular}{| c | c | c | c c c | c|} 
		\hline
		\multicolumn{2}{|c|}{} & HiFi & \multicolumn{3}{|c|}{adaptive RB} & fixed RB\\  
		\hline
		\multicolumn{2}{|c|}{initial tolerance $\varepsilon_r^0$} & n/a & $1$ &  $0.1$ &  $0.01$ & $0.00001$ \\
		\hline
		\multirow{4}{*}{$M=128$}&DOF ($N_h, N_r$) & 16641 
		& 39 & 52 & 71 & 81\\
		& time to build RB & n/a & $11.8$ & $16.9$ & $25.8$ & $29.3$\\
		& time for evaluation & $8.4\times 10^3$ & $29.2$ & $32.8$ & $40.0$ & $49.2$ \\
		& speedup factor & 1 & 205 & 169 & 128 & 107\\
		\hline
		\multirow{4}{*}{M=256}&DOF ($N_h, N_r$) & 16641 
		& 38 & 56 & 80 & 86 \\
		& time to build RB & n/a & $13.4$ & $21.9$ & $36.2$ & $36.7$ \\
		& time for evaluation & $1.7\times 10^4$ & $58.0$ & $67.9$ & $83.4$ & $103.1$\\
		& speedup factor & 1 & 235 & 187 & 140 & 120\\
		\hline
	\end{tabular}
	\caption{Comparison of computational cost of high-fidelity (HiFi) and reduced basis (RB) approximations for SVGD up to $l = 199$, with different number of particles, different RB construction schemes and tolerances, in terms of degrees of freedom (DOF), CPU time for evaluation and RB construction, and speedup factor, which is the ratio of HiFi evaluation time/(RB construction + evaluation time).}
	\label{table:cost-lognormal}
\end{table}

We report the computational cost of high-fidelity and reduced basis approximations in the SVGD process up to step $l = 199$ in Table \ref{table:cost-lognormal}. For the high-fidelity approximation, the degrees of freedom is 16,641 by the piecewise linear elements in a triangle mesh of size $129\times 129$, which leads to an averaged (128 samples at SVGD step $l = 199$) approximation error for the potential $\eta_y$ at about $10^{-4}$ (using mesh size $257\times 257$ as reference). From the results we see that with increasing initial tolerance $\varepsilon_r^0 = 1, 0.1, 0.01$, the adaptive RB  becomes more expensive for construction and evaluation, which achieves mostly over 100X speedup compared to the HiFi in terms of CPU time. Moreover, with larger number of the particles, the adaptive RB construction leads to similar number of reduced basis functions for the same initial tolerance, and achieves
higher speedup since the RB construction time does not change much. Furthermore, compared to the fixed RB construction with relatively small tolerance at the initial step, adaptive RB leads to higher speedup, while achieving higher accuracy than the former as shown in Fig.\ \ref{fig:accuracy-lognormal}. We note that the reduced basis (averaged) approximation (of $\eta_y^h$) errors are smaller than the HiFi (averaged) approximation errors of about $10^{-4}$ at SVGD step $l = 199$, even with the initial tolerance $\varepsilon_r^0 = 0.1$ as can be observed from Fig.\ \ref{fig:rom-update-lognormal} and in particular for $\varepsilon_r^0 = 0.01$ as seen from Fig.\ \ref{fig:accuracy-lognormal}. Therefore, the adaptive RB may achieve much higher speedup if the high-fidelity approximation is refined to achieve errors of about $10^{-8}$ as seen for the reduced basis approximation $\eta_r^\Delta$ in Fig.\ \ref{fig:accuracy-lognormal}.

\end{document}